\theoremstyle{definition}
\newtheorem{theorem}{Theorem}
\newtheorem{proposition}[theorem]{Proposition}
\newtheorem{corollary}[theorem]{Corollary}
\numberwithin{equation}{section}
\numberwithin{theorem}{section}
\newcommand{\C}{\mathbb{C}}
\newcommand{\Z}{\mathbb{Z}}
\newcommand{\Res}{\textrm{Res}}
\begin{document}

\begin{center}
{\bf{\Large Rational solutions \\ of the Noumi and Yamada system of type $A_4^{(1)}$}}
\end{center}

\begin{center}
Department of Engineering Science, Niihama National College of Technology,\\
7-1 Yagumo-chou, Niihama, Ehime, 792-8580. 
\end{center} 

\begin{center}
By Kazuhide Matsuda
\end{center}
\vskip 1mm
\par
\quad {\bf Abstract:} \,
We completely classify the rational solutions of the Noumi and Yamada system of type $A_4^{(1)}$, which is a generalization of the fourth Painlev\'e equation.
The rational solutions are classified to three by the B\"acklund transformation
group. 
\par
\quad {\bf Key words:} \,
the Noumi and Yamada system of type $A_4^{(1)}$; 
the affine Weyl group; the B\"acklund transformations; rational solutions.
\newline
\quad 
2000 Mathematics Subject Classification. Primary 33E17; Secondary 37K10.

\section*{Introduction}
Paul Painlev\'e and his coworkers 
\cite{Painleve, Gambier} 
intended to find ``new transcendental functions'' defined by second order nonlinear differential equations. 
For this purpose, 
they investigated which second order ordinary differential equations of the form 
$$
y^{\prime\prime}
=F(t;y,y^{\prime}),
$$
where ${}^{\prime}=d/dt$ and $F$ is rational in $y$ and $y^{\prime}$ and analytic in $t,$ 
have the property that 
the solutions have no movable branch points, i.e., 
the locations of the multi-valued singularities are independent of the particular solution chosen, 
and therefore, dependent only on the equation; 
this is known as the Painlev\'e property. 
As a result, 
the differential equations are either 
integrable in terms of previously known functions 
(such as elliptic functions or are equivalent to linear differential equations) 
or reducible to  
one of the following six equations:
\begin{alignat*}{3}
&P_{\mathrm{I}} &  &:        & \quad  &y^{\prime \prime}=6y^2+t,    \\
&P_{\mathrm{II}} &  &:     &        &y^{\prime \prime}=2y^3+3ty+\alpha,   \\
&P_{\mathrm{III}} &  &:    &  
&y^{\prime \prime}=\frac{1}{y}(y^{\prime})^2-\frac{1}{t}y^{\prime}
+\frac{1}{t}(\alpha y^2+\beta) + \gamma y^3+ \frac{\delta}{y}, \\     
&P_{\mathrm{IV}} &   &:    & 
&y^{\prime \prime}=\frac{1}{2y}(y^{\prime})^2+\frac32 y^3 +4t y^2
+2(t^2-\alpha)y+\frac{\beta}{y},  \\
&P_{\mathrm{V}}  &    &:      & \quad
&y^{\prime \prime}=
\left(\frac{1}{2y}+\frac{1}{y-1}\right)(y^{\prime})^2-\frac{1}{t}y^{\prime}  
+\frac{(y-1)^2}{t^2}\left(\alpha y +\frac{\beta}{y}\right)
+\gamma \frac{y}{t}+ \delta \frac{y(y+1)}{y-1},    
\end{alignat*}
\begin{alignat*}{3}
&P_{\mathrm{VI}}:   &\quad   
&y^{\prime \prime}=
\frac12 \left(\frac{1}{y}+\frac{1}{y-1}+\frac{1}{y-t} \right) (y^{\prime})^2
- \left(\frac{1}{t}+\frac{1}{t-1}+\frac{1}{y-t} \right) y^{\prime}  \\
& & &\hspace{25mm}  +\frac{y(y-1)(y-t)}{t^2(t-1)^2} 
\left( \alpha+\frac{\beta t}{y^2}+\frac{\gamma(t-1)}{(y-1)^2}+\delta \frac{t(t-1)}{(y-t)^2} \right), 
\end{alignat*}
where ${}^{\prime}=d/dt$ and $\alpha, \beta, \gamma, \delta$ are all complex parameters.
\par
The Painlev\'e equations were first derived from the viewpoint of pure mathematics, 
but recently, it was discovered that they are related to a wide range of 
physical problems, such as statistical mechanics and quantum field theory 
(see reference list in \cite{ItsNovokshenov}). 
A remarkable example is that 
the 2-point correlation functions of the rectangular Ising model 
in the scaling limit admit closed expressions 
in terms of the third Painlev\'e transcendents \cite{WuMcCoyTracyBarouch}. 
Furthermore, 
around 1980, 
the Painlev\'e equations were derived as the similarity reductions of 
some partial differential equations in nonlinear wave theory and relativity. 
For example, 
Clarkson \cite{Clarkson} showed that 
the Boussinesq equation has a reduction in the second and fourth Painlev\'e equations, 
and 
rational solutions of the Boussinesq equation are expressed 
in terms of special polynomials associated with 
the rational solutions of the second and fourth Painleve equations. 
Therefore, the study of solutions of the Painlev\'e equations 
is important 
not only from the viewpoint of pure mathematics, 
but also from the viewpoint of physics.
\par
In this paper, 
our concern is with ``classical solutions'' and ``B\"acklund transformations.'' 
The Painlev\'e equations, except for $P_{\mathrm I},$ have 
 ``classical solutions'' which are expressible 
in terms of rational, algebraic or classical special functions for certain values of the parameters, 
and ``B\"acklund transformations'' 
which relate one solution to another solution of the same equation with different values of the parameters. 
\par
The rational solutions of $P_{\mathrm J} \,\,(\mathrm{J}=\mathrm{II, III, IV, V, VI})$ were classified by 
Yablonski and Vorobev \cite{Yab:59,Vorob}, 
Gromak \cite{Gr:83,Gro}, 
Murata \cite{Mura1}, 
Kitaev, Law and McLeod \cite{Kit-Law-McL}, 
Yuang and Li \cite{YuangLi}, 
Mazzocco \cite{Mazzo}. 
\par
Okamoto \cite{oka1, oka2, oka3, oka4} showed that 
the B\"acklund transformation groups are isomorphic 
to the extended affine Weyl groups. 
For $ P_{\mathrm{ II}}, P_{\mathrm{III}}, P_{\mathrm{IV}}, P_{\mathrm{V}},$ and $P_{\mathrm{VI}}$, 
the B\"acklund transformation groups correspond to 
$ 
A^{ ( 1 ) }_1, 
A^{ ( 1 ) }_1 \bigoplus A^{ ( 1 ) }_1, 
A^{ ( 1 ) }_2, 
A^{ ( 1 ) }_3,$ and $ D^{ ( 1 ) }_4 
$, 
respectively.
\par
Noumi and Yamada \cite{NoumiYamada-A} discovered the equation of type $A^{(1)}_l \,(l\geq 2)$, whose B\"acklund 
transformation group is isomorphic to $\tilde{W}(A^{(1)}_l)$. 
This equation is called the Noumi and Yamada system of type $A_l^{(1)}.$ 
The Noumi and Yamada systems of types $A_2^{(1)}$ and $A_3^{(1)}$ correspond to the fourth and fifth Painlev\'e equations, respectively. 
Noted is the fact that 
Murata \cite{Mura1} and Kitaev, Law and McLeod \cite{Kit-Law-McL} classified the rational solutions 
of the fourth and fifth Painlev\'e equations 
using the B\"acklund transformations, respectively. 
In this paper, 
we completely classify the rational solutions of the Noumi and Yamada system of type $A_4^{(1)},$ 
which is defined by
\begin{equation*}
A^{(1)}_4(\alpha_j)_{0\leq j \leq 4}\,\,
\begin{cases}
f_0^{\prime}=f_0(f_1-f_2+f_3-f_4)+\alpha_0 \\ 
f_1^{\prime}=f_1(f_2-f_3+f_4-f_0)+\alpha_1 \\  
f_2^{\prime}=f_2(f_3-f_4+f_0-f_1)+\alpha_2 \\  
f_3^{\prime}=f_3(f_4-f_0+f_1-f_2)+\alpha_3 \\
f_4^{\prime}=f_4(f_0-f_1+f_2-f_3)+\alpha_4 \\
f_0+f_1+f_2+f_3+f_4=t,
\end{cases}
\end{equation*}
where $\alpha_i \,(0\leq i \leq 4)$ are all complex parameters. 
For $A_4^{(1)}(\alpha_j)_{0\leq j \leq 4},$ 
we consider the suffix of $f_i$ and $\alpha_i$ as elements of $\mathbb{Z}/5\mathbb{Z}$. 
Since $\sum_{k=0}^4 f_k=t,$ it follows that 
$$
\alpha_0+\alpha_1+\alpha_2+\alpha_3+\alpha_4=1,
$$
and 
$A_4^{(1)}(\alpha_j)_{0\leq j \leq 4}$ is an essentially nonlinear equation with the fourth order. 
Setting $f_3 \equiv f_4 \equiv 0,$ 
we obtain the Noumi and Yamada system of type $A_2^{(1)},$ 
which is defined by
\begin{equation*}
A^{(1)}_2(\alpha_0,\alpha_1,\alpha_2) \,\,
\begin{cases}
f_0^{\prime}=f_0(f_1-f_2)+\alpha_0 \\ 
f_1^{\prime}=f_1(f_2-f_0)+\alpha_1 \\  
f_2^{\prime}=f_2(f_0-f_1)+\alpha_2 \\  
f_0+f_1+f_2=t,
\end{cases}
\end{equation*}
and is equivalent to the fourth Painlev\'e equation, $P_{\mathrm{IV}}.$ 
$A_4^{(1)}(\alpha_j)_{0\leq j \leq 4}$ is  
the first equation of the Noumi and Yamada systems  of types $A_l^{(1)}\,(l\geq 2)$ which is not the 
original Painlev\'e equations. 
We note that 
Veselov, Shabat \cite{VeselovShabat} and Adler \cite{Adler} 
studied the symmetric forms of the Painlev\'e equations 
from the viewpoint of soliton theory.  
\par
$A_4^{(1)}(\alpha_j)_{0\leq j \leq 4}$ has the B\"acklund transformations, 
$s_0, s_1,s_2, s_3, s_4$ and $\pi$: 
\begin{center}
\begin{tabular}{|c||c|c|c|c|c|c|}
\hline
$x$ & $s_0(x)$ & $s_1(x)$ & $s_2(x)$ & 
$s_3(x)$ & $s_4 (x)$ & $\pi(x)$ \\
\hline
$f_0$ & $f_0$ & $f_0-\alpha_1/f_1$ & $f_0$ & 
$f_0$ & $f_0+\alpha_4/f_4$ & $f_1$ \\
\hline
$f_1$ & $f_1+\alpha_0/f_0$ & $f_1$ & 
$f_1-\alpha_2/f_2$ & $f_1$ & $f_1$ & $f_2$ \\
\hline
$ f_2 $ & $ f_2 $ & $ f_2 + \alpha_1/f_1 $ & 
$ f_2 $ & $ f_2 - \alpha_3/f_3 $ & $ f_2 $ & $ f_3 $ \\
\hline
$ f_3 $ & $ f_3 $ & $ f_3 $ & $ f_3 + \alpha_2/f_2 $ & 
$ f_3 $ & $ f_3 - \alpha_4/f_4 $ & $ f_4 $ \\
\hline
$ f_4 $ & $ f_4 - \alpha_0/f_0 $ & $ f_4 $ & $ f_4 $ & 
$ f_4 + \alpha_3/f_3 $ & $ f_4 $ & $ f_0 $ \\
\hline
\hline
$ \alpha_0 $ & $ - \alpha_0 $ & $ \alpha_0 + \alpha_1 $ & 
$ \alpha_0 $ & $ \alpha_0 $ & $ \alpha_0 + \alpha_4 $ & $ \alpha_1 $ \\
\hline
$ \alpha_1 $ & $ \alpha_1 + \alpha_0 $ & $ - \alpha_1 $ & 
$ \alpha_1 + \alpha_2 $ & $ \alpha_1 $ & $ \alpha_1 $ & $ \alpha_2 $ \\
\hline
$ \alpha_2 $ & $ \alpha_2 $ & $ \alpha_2 + \alpha_1 $ & 
$ - \alpha_2 $ & $ \alpha_2 + \alpha_3 $ & $ \alpha_2 $ & $ \alpha_3 $ \\
\hline
$ \alpha_3 $ & $ \alpha_3 $ & $ \alpha_3 $ & $ \alpha_3 + \alpha_2 $ & 
$ - \alpha_3 $ & $ \alpha_3 + \alpha_4 $ & $ \alpha_4 $ \\
\hline
$ \alpha_4 $ & $ \alpha_4 + \alpha_0 $ & $ \alpha_4 $ & $ \alpha_4 $ & 
$ \alpha_4 + \alpha_3 $ & $ - \alpha_4 $ & $ \alpha_0 $ \\
\hline
\end{tabular}
\end{center}
The B\"acklund transformation group $\langle s_0, s_1, s_2, s_3, s_4, \pi \rangle$
is isomorphic to the extended affine Weyl group $\tilde{W}(A^{(1)}_4)$.
If $f_i \equiv 0$ for some $i=0,1,2,3,4,$ 
which implies that $\alpha_i=0,$ 
we then consider $s_i$ as the identical transformation 
which is given by 
$$
s_i(f_j)=f_j \,\, \mathrm{and} \,\,s_i(\alpha_j)=\alpha_j \,\,(j=0,1,2,3,4).
$$
\par
The result of this paper is that the rational solutions of the Noumi and Yamada system of type $A_4^{(1)}$ are 
classified into three types by the B\"acklund transformations. 
The main theorem of this paper was announced in \cite{Matsu} and 
is as follows: 
\begin{theorem}
\label{thm:a4main}
{\it
For a rational solution of $A^{(1)}_4(\alpha_j)_{0\leq j \leq 4},$ 
by some B\"acklund transformations, 
the solution and parameters can be transformed so that 
one of the following occurs:
\begin{align*}
&\rm{Type \,A} & 
&(f_0 ,f_1, f_2, f_3, f_4)=(t,0,0,0,0), &
&(\alpha_0, \alpha_1, \alpha_2, \alpha_3, \alpha_4) =
(1,0,0,0,0), \\
&\rm{Type \,B}& 
&(f_0 ,f_1, f_2, f_3, f_4) =
(t/3, t/3,  t/3, 0, 0), &  
&(\alpha_0, \alpha_1, \alpha_2, \alpha_3, \alpha_4)= 
(1/3, 1/3, 1/3,0,0),  \\
&\rm{Type \,C} &
&(f_0 ,f_1, f_2, f_3, f_4) =
(t/5, t/5, t/5, t/5, t/5), &  
&(\alpha_0, \alpha_1, \alpha_2, \alpha_3, \alpha_4)= 
(1/5, 1/5, 1/5, 1/5, 1/5). 
\end{align*}
\par
Furthermore, $A^{(1)}_4(\alpha_j)_{0\leq j \leq 4}$ has a rational solution if and only if one of the following occurs:
\newline
{\rm (1)} \quad $\alpha_0, \alpha_1, \alpha_2, \alpha_3, \alpha_4 \in \Z;$
\newline
{\rm (2)} \quad for some $ i = 0, 1, 2,3, 4$,
$$
\quad  
(\alpha_i, \alpha_{i+1}, \alpha_{i+2}, \alpha_{i+3}, \alpha_{i+4}) \equiv
\left\{
\begin{matrix}
\pm 1/3(1, 1, 1, 0, 0) \quad {\rm mod}   \, \Z \\
\pm 1/3(1, -1, -1, 1, 0)\quad {\rm mod}   \, \Z;
\end{matrix}
\right.
$$
\newline
{\rm (3)} \quad for some $i = 0, 1, 2,3, 4$,
$$
(\alpha_i, \alpha_{i+1}, \alpha_{i+2}, \alpha_{i+3}, \alpha_{i+4}) \equiv
\left\{
\begin{matrix}
  j/5(1, 1, 1, 1, 1) \quad {\rm mod} \, \Z    \\
  j/5(1, 2, 1, 3, 3) \quad {\rm mod} \, \Z,   
\end{matrix}
\right.
$$
with some $ j=1, 2, 3, 4. $
\par 
Cases $\mathrm{(1)}, \mathrm{(2)}$ and $\mathrm{(3)}$ correspond to $\mathrm{Type \,\,A},$ $\mathrm{Type \,\,B}$ and $\mathrm{Type \,\,C}$, respectively. 
}
\end{theorem}
\par 
This paper is organized as follows. 
In Section 1, for $A^{(1)}_4(\alpha_i)_{0\leq i \leq 4},$ 
we determine the meromorphic solutions at $t=\infty.$ 
We then find that 
the residues of $f_i \,\,(0\leq i \leq 4)$ at $t=\infty$ are expressed by the parameters 
$\alpha_i \,\,(0\leq i \leq 4).$ 
Moreover, 
we obtain three classes of meromorphic solutions at $t=\infty,$ Type A, Type B and Type C. 
\par
In section 2, following Tahara \cite{Tahara}, 
we investigate the meromorphic solutions at $t=c\in\mathbb{C}$ 
such that some of $(f_i)_{0\leq i \leq 4}$ have a pole at $t=c.$ 
We then see that the residues of $f_i \,(0\leq i \leq 4)$ at $t=c$ are integers. 
Therefore, it follows that $\mathrm{Res}_{t=\infty} f_i \in\mathbb{Z},$ 
which provides the necessary conditions for $A^{(1)}_4(\alpha_i)_{0\leq i \leq 4}$ to have rational solutions. 
Especially, the necessary conditions are expressed by the parameters.  
\par
In Section 3, following Noumi and Yamada \cite{NoumiYamada-B}, 
we first introduce the Hamiltonian $H$ of $A_4^{(1)}(\alpha_i)_{0 \leq i \leq 4}$ 
and its principal part $\hat{H}.$
We next calculate the residues of $\hat{H}$ at $t=\infty, c,$ 
which are given by the parameters, $\alpha_j \,(0\leq j \leq 4).$ 
We last show the relationship between the rational solutions and $\hat{H}.$ 
\par
In Section 4, 
following Noumi and Yamada \cite{NoumiYamada-B}, 
we first introduce the shift operators. 
We next prove that 
for a rational solution of $A^{(1)}_4(\alpha_i)_{0\leq i \leq 4},$ 
by some B\"acklund transformations, 
the parameters can be transformed so that $\alpha_i \in\mathbb{Q}$ and $0\leq \alpha_i \leq 1 \,\,(0\leq i \leq 4).$ 
\par
In Section 5, we obtain the necessary conditions for $A^{(1)}_4(\alpha_i)_{0\leq i \leq 4}$ 
to have rational solutions of Type A, Type B and Type C. 
\par
In Section 6, 
we determine the rational solutions of $A_4^{(1)}(\alpha_i)_{0\leq i \leq 4}$ such that 
the parameters satisfy $0\leq \alpha_i \leq 1\,\,(0\leq i \leq 4).$ 
In Section 7, 
we prove the main theorem.

\, {\it Acknowledgments.} \,
The author wishes to express his sincere thanks to Professor Yousuke Ohyama.

\section{Meromorphic Solutions at $t=\infty$}
In this section, 
for $A_4^{(1)}(\alpha_j)_{0\leq j \leq 4},$ 
we determine 
the meromorphic solutions at $t=\infty,$
and see that 
the residues of $f_j \,\,(0\leq j \leq 4)$ at $t=\infty$ are expressed by the parameters 
$\alpha_i \,\,(0\leq i \leq 4).$ 
\par
For this purpose, 
we set  
\begin{equation*}
\begin{cases}
f_0 =a_{\infty, n_0}t^{n_0}+a_{\infty, n_0-1}t^{n_0-1}+\cdots+a_{\infty,0}+a_{\infty,-1}t^{-1}+\cdots,   \\
f_1 =b_{\infty, n_1}t^{n_1}+b_{\infty, n_1-1}t^{n_1-1}+\cdots+b_{\infty,0}+b_{\infty,-1}t^{-1}+\cdots,   \\
f_2 =c_{\infty, n_2}t^{n_2}+c_{\infty, n_2-1}t^{n_2-1}+\cdots+c_{\infty,0}+c_{\infty,-1}t^{-1}+\cdots,   \\
f_3 =d_{\infty, n_3}t^{n_3}+d_{\infty, n_3-1}t^{n_3-1}+\cdots+d_{\infty,0}+d_{\infty,-1}t^{-1}+\cdots,   \\
f_4 =e_{\infty, n_4}t^{n_4}+e_{\infty, n_4-1}t^{n_4-1}+\cdots+e_{\infty,0}+e_{\infty,-1}t^{-1}+\cdots,   \\
f_5 =f_{\infty, n_5}t^{n_5}+f_{\infty, n_5-1}t^{n_5-1}+\cdots+f_{\infty,0}+f_{\infty,-1}t^{-1}+\cdots,   \\
\end{cases}
\end{equation*}
where $n_0,n_1,n_2,n_3,n_4$ are all integers.

\subsection{The case where  $f_i \,(0\leq i \leq 4)$ are all holomorphic at $t=\infty$}

\begin{proposition}
\label{prop:t=inf-holo}
{
For $A_4^{(1)}(\alpha_j)_{0\leq j \leq 4},$ 
there exists no solution such that 
$f_i \,(0\leq i \leq 4)$ are all holomorphic at $t=\infty.$ 
}
\end{proposition}

\begin{proof}
Considering $f_0+f_1+f_2+f_3+f_4=t, $ 
we can prove the proposition.

\end{proof}

\subsection{The case where one of $f_i \,(0\leq i \leq 4)$ has a pole at $t=\infty$}

\begin{proposition}
\label{prop:t=inf-one-(1)}
{\it 
Suppose that 
for $A^{(1)}_4(\alpha_j)_{0\leq j \leq 4},$ 
there exists a solution such that 
for some $i=0,1,2,3,4,$ 
$f_i$ has a pole at $t=\infty$ and $f_{i+1},f_{i+2},f_{i+3},f_{i+4}$ are all holomorphic at $t=\infty.$ 
Then, 
\begin{equation*}
\begin{cases}
f_i 
&= 
t 
+ 
( 
-\alpha_{i+1}+\alpha_{i+2}-\alpha_{i+3} 
+\alpha_{i+4} 
) 
t^{-1} 
+ 
\cdots, \\
f_{i+1} 
&= 
\alpha_{i+1} t^{-1} + \cdots, \\
f_{i+2} 
&= 
-\alpha_{i+2} 
t^{-1} + \cdots, \\
f_{i+3} 
&= 
\alpha_{i+3} 
t^{-1} + \cdots, \\
f_{i+4} 
&= 
- 
\alpha_{i+4}  
t^{-1} + \cdots. 
\end{cases}
\end{equation*}
}
\end{proposition}

\begin{proof}
By $\pi$, we assume that $f_0$ has a pole at $t=\infty$. 
Since $\sum_{k=0}^4 f_k=t,$ it follows that 
\begin{equation*}
n_0=1, n_j \leq 0 \,\,(1 \leq j \leq 4) \,\, \text{and} \,\,a_{\infty,1}=1.
\end{equation*}
By comparing the coefficients of the term $t, t^0$ in 
\begin{equation*}
f_1^{\prime}
=f_1(f_2-f_3+f_4-f_0) + \alpha_1,
\end{equation*} 
we then get 
\begin{equation*}
b_{\infty,0}=0,\,\, b_{\infty,-1}=\alpha_1.
\end{equation*}
In the same way, we obtain
\begin{equation*}
c_{\infty,0}=0,    c_{\infty,-1}=-\alpha_2,     \,\,\,
d_{\infty,0}=0,    d_{\infty,-1}=\alpha_3,  \,\,\,
e_{\infty,0}=0,    e_{\infty,-1}=-\alpha_4. 
\end{equation*}
Since $\sum_{k=0}^4 f_k=t,$ it follows that 
\begin{equation*}
a_{\infty,0}=0, a_{\infty,-1}=-\alpha_1+\alpha_2-\alpha_3+\alpha_4.
\end{equation*}

\end{proof}

\begin{proposition}
\label{prop:t=inf-one-(2)}
{\it 
Suppose that 
for $A^{(1)}_4(\alpha_j)_{0\leq j \leq 4},$ 
there exists a solution such that 
for some $i=0,1,2,3,4,$ 
$f_i$ has a pole at $t=\infty$ and $f_{i+1},f_{i+2},f_{i+3},f_{i+4}$ are all holomorphic at $t=\infty.$ 
It is then unique. 
}
\end{proposition}

\begin{proof}
By $\pi$ and Proposition \ref{prop:t=inf-one-(1)}, 
we can set 
\begin{equation*}
\begin{cases}
f_0&=t+ a_{\infty,-1}t^{-1}+\sum^{k=\infty}_{k=2 }a_{\infty,-k}t^{-k}, \\
f_1&=   b_{\infty,-1}t^{-1}+\sum^{k=\infty}_{k=2}b_{\infty,-k}t^{-k},  \\
f_2&=   c_{\infty,-1}t^{-1}+\sum^{k=\infty}_{k=2}c_{\infty,-k}t^{-k},  \\
f_3&=   d_{\infty,-1}t^{-1}+\sum^{k=\infty}_{k=2 }d_{\infty,-k}t^{-k},  \\
f_4&=   e_{\infty,-1}t^{-1}+\sum^{k=\infty}_{k=2 }e_{\infty,-k}t^{-k}, 
\end{cases}
\end{equation*}
where $a_{\infty,-1}, b_{\infty,-1}, c_{\infty,-1}, d_{\infty,-1}, e_{\infty,-1}$ have been determined in Proposition \ref{prop:t=inf-one-(1)} and 
\begin{equation*}
\begin{cases}
a_{\infty,-1}=-\alpha_1+\alpha_2-\alpha_3+\alpha_4  \\
b_{\infty, -1}=\alpha_1, \, c_{\infty, -1}=-\alpha_2, \, d_{\infty, -1}=\alpha_3, e_{\infty, -1}=-\alpha_4.
\end{cases}
\end{equation*}
By comparing the coefficients of the terms $t^{-k} \,\,(k \geq 1)$ in
\begin{equation*}
\begin{cases}
f_1^{\prime}=f_1(f_2-f_3+f_4-f_0)+\alpha_1 \\  
f_2^{\prime}=f_2(f_3-f_4+f_0-f_1)+\alpha_2 \\  
f_3^{\prime}=f_3(f_4-f_0+f_1-f_2)+\alpha_3 \\
f_4^{\prime}=f_4(f_0-f_1+f_2-f_3)+\alpha_4,  
\end{cases}
\end{equation*}
we then get
\begin{equation*}
\begin{cases}
b_{\infty,-(k+1)}=b_{\infty,-k}(k-1)
+\sum b_{\infty,-l}(c_{\infty,-m}-d_{\infty,-m}+e_{\infty,-m}-a_{\infty,-m}) \\
c_{\infty,-(k+1)}=-c_{\infty,-k}(k-1)
-\sum  c_{\infty, -l}(d_{\infty, -m}-e_{\infty, -m}+a_{\infty, -m}-b_{\infty, -m}) \\
d_{\infty, -(k+1)}=d_{\infty, -k}(k-1)
+\sum d_{\infty,-l }(e_{\infty, -m}-a_{\infty, -m}+b_{\infty, -m}-c_{\infty, -m}) \\
e_{\infty,-(k+1)}=-e_{\infty, -k}(k-1)
-\sum e_{\infty, -l}(a_{\infty, -m}-e_{\infty, -m}+c_{\infty, -m}-d_{\infty, -m}),
\end{cases}
\end{equation*}
where the sums extend over the positive integers $l$ and $m$ such that $l+m=k.$ 
Since $\sum_{j=0}^4 f_j=t,$ 
it follows that 
\begin{equation*}
a_{\infty, -(k+1)}
=
-b_{\infty, -(k+1)}-c_{\infty, -(k+1)}-d_{\infty, -(k+1)}-e_{\infty, -(k+1)}.
\end{equation*}
Therefore, 
the coefficients 
$a_{\infty,-k},b_{\infty,-k},c_{\infty, -k},d_{\infty, -k},e_{\infty, -k} \,\,(k \geq 1)$ 
are inductively determined.

\end{proof}

From the proof of Proposition \ref{prop:t=inf-one-(2)}, 
we can prove the following corollary.

\begin{corollary}
\label{coro:t=inf-one-identically-zero}
{\it
Suppose that 
for $A^{(1)}_4(\alpha_j)_{0\leq j \leq 4},$ 
there exists a solution such that 
for some $i=0,1,2,3,4,$ 
$f_i$ has a pole at $t=\infty$ and $f_{i+1},f_{i+2},f_{i+3},f_{i+4}$ are all holomorphic at $t=\infty.$ 
Then, 
\begin{equation*}
\begin{cases}
f_{i+1} \equiv 0 \,\,  \text{ if and only if} \,\,\alpha_{i+1}=0, \\
f_{i+2} \equiv 0 \,\,  \text{ if and only if} \,\,\alpha_{i+2}=0, \\
f_{i+3} \equiv 0 \,\,  \text{ if and only if} \,\,\alpha_{i+3}=0, \\
f_{i+4} \equiv 0 \,\,  \text{ if and only if} \,\,\alpha_{i+4}=0. 
\end{cases}
\end{equation*}
}
\end{corollary}

\subsection{The case where two of $f_i\,\,(0\leq i \leq 4)$ have a pole at $t=\infty$}
By $\pi,$ 
we have only to consider the following two cases:
\newline
(1) \quad for some $i=0,1,2,3,4,$ $f_i, f_{i+1}$ both have a pole at $t=\infty,$ \\
(2) \quad for some $i=0,1,2,3,4,$ $f_i, f_{i+2}$ both have a pole at $t=\infty.$

\subsubsection{The case where $f_i, f_{i+1}$ have a pole at $t=\infty$}

\begin{proposition}
\label{prop:t=inf-(f0,f1)-(1)}
{\it 
For $A^{(1)}_4(\alpha_j)_{0\leq j \leq 4},$ 
there exists no solution such that 
for some $i=0,1,2,3,4,$ 
$f_i, f_{i+1}$ both have a pole at $t=\infty$ and $f_{i+2},f_{i+3},f_{i+4}$ are all holomorphic at $t=\infty.$ 
}
\end{proposition}

\begin{proof}
Suppose that 
$A^{(1)}_4(\alpha_j)_{0\leq j \leq 4}$ has such a solution. 
By $\pi,$ 
we can then assume that 
$f_0, f_{1}$ both have a pole at $t=\infty$ and $f_{2},f_{3},f_{4}$ are all holomorphic at $t=\infty.$ 
Thus, 
we note that $n_0,n_1\geq 1,$ $a_{\infty, n_0}b_{\infty, n_1}\neq 0$ and $n_2,n_3,n_4\leq 0.$ 
\par
By comparing the coefficients of the term $t^{n_0+n_1}$ in 
$$
f_1^{\prime}=f_1(f_2-f_3+f_4-f_0)+\alpha_1,
$$
we have $0=-b_{\infty,n_1}a_{\infty,n_0},$ 
which is contradiction.

\end{proof}

\subsubsection{The case where $f_i, f_{i+2}$ have a pole at $t=\infty$}

\begin{proposition}
\label{prop:t=inf-(f0,f2)-(1)}
{\it 
For $A^{(1)}_4(\alpha_j)_{0\leq j \leq 4},$ 
there exists no solution such that 
for some $i=0,1,2,3,4,$ 
$f_i, f_{i+2}$ both have a pole at $t=\infty$ and $f_{i+1},f_{i+3},f_{i+4}$ are all holomorphic at $t=\infty.$ 
}
\end{proposition}

\begin{proof}
The proposition can be proved in the same way as Proposition \ref{prop:t=inf-(f0,f1)-(1)}.

\end{proof}

\subsection{The case where three of $f_i\,\,(0\leq i \leq 4)$ have a pole at $t=\infty$}
By $\pi,$ we have only to consider the following two cases: 
\newline
(1) \quad for some $i=0,1,2,3,4,$ $f_i,f_{i+1},f_{i+2}$ all have a pole at $t=\infty,$ \\
(2) \quad for some $i=0,1,2,3,4,$ $f_i,f_{i+1},f_{i+3}$ all have a pole at $t=\infty$.

\subsubsection{The case where $f_i,f_{i+1},f_{i+2}$ have a pole at $t=\infty$}

\begin{proposition}
\label{prop:t=inf-f0,f1,f2-(1)}
{\it
Suppose that 
for $A^{(1)}_4(\alpha_j)_{0\leq j \leq 4},$ 
there exists a solution such that 
for some $i=0,1,2,3,4,$ 
$f_i,f_{i+1},f_{i+2}$ all have a pole at $t=\infty$ and $f_{i+3},f_{i+4}$ are both holomorphic at $t=\infty.$ 
Then, $n_{i}=n_{i+1}=n_{i+2}=1.$
}
\end{proposition}

\begin{proof}
By $\pi,$ 
we can assume that $f_0,f_1,f_2$ all have a pole at $t=\infty$ and $f_3,f_4$ are both holomorphic $t=\infty.$ 
It then follows that 
$n_0, n_1, n_2\geq 1,$ $a_{\infty,n_0}b_{\infty,n_1}c_{\infty,n_2}\neq 0$ and $n_3, n_4\leq 0.$ 
Since $\sum_{k=0}^4 f_k=t,$ 
one of the following four cases occurs:
\begin{align*}
&\mathrm{(i)} \quad n_0=n_1 > n_2 \geq 1 & 
&\mathrm{(ii)} \quad n_1=n_2 >n_0 \geq 1  \\
&\mathrm{(iii)} \quad n_2=n_0 >n_1 \geq 1 & 
&\mathrm{(iv)} \quad n_0=n_1=n_2 \geq 1.  
\end{align*}
\par
We first treat case (i).  
By comparing the coefficients of the term $t^{n_0+n_1}$ in 
\begin{equation*}
f_0^{\prime}
=
f_0(f_1-f_2+f_3-f_4)+\alpha_0,
\end{equation*}
we have $0=a_{\infty,n_0}b_{\infty,n_1}, $ 
which is contradiction. 
We can prove that neither of cases (ii), (iii) occurs in the same way. 
\par
We next deal with case (iv). 
We show that $n_0=n_1=n_2=1.$ For this purpose, 
we set $n_0=n_1=n_2=n\geq 1.$
By comparing the coefficients of  term $t^{2n}$ in 
\begin{equation*}
\begin{cases}
f_0^{\prime}=f_0(f_1-f_2+f_3-f_4)+\alpha_0 \\
f_1^{\prime}=f_1(f_2-f_3+f_4-f_0)+\alpha_1 \\
f_2^{\prime}=f_2(f_3-f_4+f_0-f_1)+\alpha_2, 
\end{cases}
\end{equation*}
we have 
\begin{equation*}
\begin{cases}
b_{\infty, n}-c_{\infty, n}=0 \\
c_{\infty, n}-a_{\infty, n}=0 \\
a_{\infty, n}-b_{\infty, n}=0,
\end{cases}
\end{equation*}
which implies that $a_{\infty,n}=b_{\infty,n}=c_{\infty,n}\neq 0.$  
Since $\sum_{k=0}^4 f_k=t,$ 
it follows that 
\begin{equation*}
n_0=n_1=n_2=n=1, \,\,a_{\infty,1}=b_{\infty,1}=c_{\infty,1}=1/3.
\end{equation*}
\end{proof}

\begin{proposition}
\label{prop:t=inf-f0,f1,f2-(2)}
{\it
Suppose that 
for $A^{(1)}_4(\alpha_j)_{0\leq j \leq 4},$ 
there exists a solution such that 
for some $i=0,1,2,3,4,$ 
$f_i,f_{i+1},f_{i+2}$ have a pole at $t=\infty$ and $f_{i+3},f_{i+4}$ are both holomorphic at $t=\infty.$ 
Then, 
\begin{equation*}
\begin{cases}
f_i 
= 
1/3t 
+ 
( 
\alpha_{i+1}-\alpha_{i+2}-3\alpha_{i+3}-\alpha_{i+4} 
) 
t^{-1} + \cdots \\
f_{i+1} 
= 
1/3t 
+ 
( 
\alpha_{i+2}-\alpha_i-\alpha_{i+3}+\alpha_{i+4} 
) 
t^{-1} + \cdots \\
f_{i+2} 
= 
1/3t 
+ 
( 
\alpha_i-\alpha_{i+1}+\alpha_{i+3}+3\alpha_{i+4} 
) t^{-1}
+ \cdots \\
f_{i+3} 
= 
3\alpha_{i+3} 
t^{-1} + \cdots \\
f_{i+4} 
= 
-3\alpha_{i+4} t^{-1} + \cdots.
\end{cases}
\end{equation*}
}
\end{proposition}

\begin{proof}
By $\pi,$ we assume that $f_0,f_1,f_2$ have a pole at $t=\infty.$ 
Then 
it follows 
from Proposition \ref{prop:t=inf-f0,f1,f2-(1)} and its proof that 
\begin{equation}
\begin{cases}
f_0=1/3 t +\sum^{k= \infty}_{k=0}a_{\infty,-k}t^{-k}, \,\,
f_1=1/3 t +\sum^{k= \infty}_{k=0}b_{\infty, -k}t^{-k},  \,\,
f_2=1/3 t +\sum^{k= \infty}_{k=0}c_{\infty,-k}t^{-k},  \\
f_3=\sum^{k=\infty}_{k=0}d_{\infty, -k}t^{-k},  \,\,
f_4=\sum^{k=\infty}_{k=0}e_{\infty, -k}t^{-k}. 
\end{cases}
\end{equation} 
By comparing the coefficients of the term $t$ in 
\begin{equation*}
f_3^{\prime}=f_3(f_4-f_0+f_1-f_2)+\alpha_3,
\end{equation*}  
we then obtain $d_{\infty,0}=0$. 
Moreover, comparing the constant terms in 
\begin{equation*}
f_3^{\prime}=f_3(f_4-f_0+f_1-f_2)+\alpha_3,
\end{equation*}  
we have 
\begin{equation*}
d_{\infty,-1}=3\alpha_3.
\end{equation*}
In the same way, we get 
\begin{equation*}
e_{\infty,0}=0, \, \, e_{\infty,-1}=-3\alpha_4.
\end{equation*}
\par
By comparing the coefficients of the term $t$ in 
\begin{equation*}
\begin{cases}
f_0^{\prime}=f_0(f_1-f_2+f_3-f_4)+\alpha_0 \\
f_1^{\prime}=f_1(f_2-f_3+f_4-f_0)+\alpha_1, 
\end{cases}
\end{equation*}
we obtain
\begin{equation*}
\begin{cases}
b_{\infty,0}-a_{\infty,0}=0 \\
c_{\infty,0}-a_{\infty,0}=0,
\end{cases}
\end{equation*}
respectively. 
Since $\sum_{k=0}^4 f_k=t$ and $d_{\infty,0}=e_{\infty,0}=0,$  
it follows that 
\begin{equation*}
a_{\infty,0}=b_{\infty,0}=c_{\infty,0}=0. 
\end{equation*}
\par
By comparing the constant terms in 
\begin{equation*}
\begin{cases}
f_0^{\prime}=f_0(f_1-f_2+f_3-f_4)+\alpha_0 \\
f_1^{\prime}=f_1(f_2-f_3+f_4-f_0)+\alpha_1 \\
f_2^{\prime}=f_2(f_3-f_4+f_0-f_1)+\alpha_2, 
\end{cases}
\end{equation*}
we get 
\begin{equation*}
\begin{cases}
b_{\infty,-1}-c_{\infty,-1}
=1-3\alpha_0-3\alpha_3-3\alpha_4 \\
c_{\infty,-1}-a_{\infty,-1}
=1-3\alpha_1+3\alpha_3+3\alpha_4 \\
a_{\infty,-1}-b_{\infty,-1}
=
1-3\alpha_2-3\alpha_3-3\alpha_4,
\end{cases}
\end{equation*}
respectively. 
Since $\sum_{k=0}^4 f_k=t$ and $d_{\infty,-1}=3\alpha_3, e_{\infty,-1}=-3\alpha_4,$  
it follows that 
\begin{equation*}
\begin{cases}
a_{\infty,-1}=\alpha_1-\alpha_2-3\alpha_3-\alpha_4, \\
b_{\infty,-1}=-\alpha_0+\alpha_2-\alpha_3+\alpha_4, \\
c_{\infty,-1}=\alpha_0-\alpha_1+\alpha_3+3\alpha_4.
\end{cases}
\end{equation*}

\end{proof}

\begin{proposition}
\label{prop:t=inf-f0,f1,f2-(3)}
{\it
Suppose that 
for $A^{(1)}_4(\alpha_j)_{0\leq j \leq 4},$ 
there exists a solution such that 
for some $i=0,1,2,3,4,$ 
$f_i,f_{i+1},f_{i+2}$ have a pole at $t=\infty$ and $f_{i+3},f_{i+4}$ are both holomorphic at $t=\infty.$ 
It is then unique. 
}
\end{proposition}

\begin{proof}
By $\pi$ and Proposition \ref{prop:t=inf-f0,f1,f2-(2)}, 
we can assume that  
\begin{equation*}
\begin{cases}
f_0=1/3t+a_{\infty,-1}t^{-1}+\sum^{\infty}_{k=2}a_{\infty,-k}t^{-k}, \\
f_1=1/3t+b_{\infty, -1}t^{-1}+\sum^{\infty}_{k=2 }b_{\infty, -k}t^{-k},  \\
f_2=1/3t+c_{\infty, -1}t^{-1}+\sum^{\infty}_{k=2 }c_{\infty,-k}t^{-k},  \\
f_3=d_{\infty, -1}t^{-1}+\sum^{\infty}_{k=2 }d_{\infty, -k}t^{-k},  \\
f_4=e_{\infty, -1}t^{-1}+\sum^{\infty}_{k=2 }e_{\infty, -k}t^{-k}, 
\end{cases}
\end{equation*}
where $a_{\infty, -1}, b_{\infty, -1}, c_{\infty, -1}, d_{\infty, -1}, e_{\infty, -1}$ have been determined in Proposition \ref{prop:t=inf-f0,f1,f2-(2)}. 
By comparing the coefficients of the terms $t^{-k} \,\,(k \geq 1)$ in
\begin{equation*}
\begin{cases}
f_3^{\prime}=f_3(f_4-f_0+f_1-f_2)+\alpha_3 \\
f_4^{\prime}=f_4(f_0-f_1+f_2-f_3)+\alpha_4,  
\end{cases}
\end{equation*}
we obtain
\begin{equation*}
\begin{cases}
d_{\infty, -(k+1)}
=
3(k-1)d_{\infty, -(k-1)} +
3 \sum_{l,m} d_{\infty, -l}(e_{\infty, -m}-a_{\infty, -m}+b_{\infty, -m}-c_{\infty, -m})  \\
e_{\infty, -(k+1)}
=
-3(k-1)e_{\infty, -(k-1)}
-3 \sum_{l,m}  e_{\infty, -l}
(a_{\infty, -m}-b_{\infty, -m}+c_{\infty, -m}-d_{\infty, -m}),
\end{cases}
\end{equation*}
where the sums extend over the positive integers $l$ and $m$ such that $l+m=k.$ 
Therefore, 
the coefficients 
$d_{\infty, -k},e_{\infty, -k} \,\,(k \geq 1)$ 
are inductively determined.
\par
By comparing the coefficients of the terms $t^{-k} \,\,(k\geq 1)$ in
\begin{equation*}
\begin{cases}
f_0^{\prime}=f_0(f_1-f_2+f_3-f_4)+\alpha_0 \\ 
f_1^{\prime}=f_1(f_2-f_3+f_4-f_0)+\alpha_1 \\  
f_2^{\prime}=f_2(f_3-f_4+f_0-f_1)+\alpha_2,    
\end{cases}
\end{equation*}
we have 
\begin{equation*}
\begin{cases}
c_{\infty, -(k+1)}-b_{\infty, -(k+1)}
=
-3(k-1)a_{\infty, -(k-1)}
+3\sum_{l,m} a_{\infty, -l}(b_{\infty, -m}-c_{\infty, -m}+d_{\infty, -m}-e_{\infty, -m})  \\
a_{\infty, -(k+1)}-c_{\infty, -(k+1)}
=
-3(k-1)b_{\infty, -(k-1)}
+3\sum_{l,m} b_{\infty, -l}(c_{\infty, -m}-d_{\infty, -m}+e_{\infty, -m}-a_{\infty, -m})  \\
b_{\infty, -(k+1)}-a_{\infty, -(k+1)}
=
-3(k-1)c_{\infty,-(k-1)}
+3\sum_{l,m} c_{\infty, -l}(d_{\infty, -m}-e_{\infty, -m}+a_{\infty, -m}-b_{\infty, -m}),
\end{cases}
\end{equation*}
where the sums extend over the positive integers $l$ and $m$ such that $l+m=k.$ 
Since $\sum_{i=0}^4 f_i=t,$ 
it follows that 
\begin{equation*}
a_{\infty, -(k+1)}+b_{\infty, -(k+1)}+c_{\infty, -(k+1)}
=-d_{\infty, -(k+1)}-e_{\infty, -(k+1)}.
\end{equation*}
Therefore, 
the coefficients 
$a_{\infty, -k},b_{\infty, -k}, c_{\infty, -k} \,\,(k \geq 1)$ 
are inductively determined.

\end{proof}

\begin{corollary}
\label{coro:t=inf-three(1)-identically-zero}
{\it
Suppose that 
for $A^{(1)}_4(\alpha_j)_{0\leq j \leq 4},$ 
there exists a solution such that 
for some $i=0,1,2,3,4,$ 
$f_i,f_{i+1},f_{i+2}$ have a pole at $t=\infty$ and $f_{i+3},f_{i+4}$ are both holomorphic at $t=\infty.$ 
Then, 
\begin{equation*}
\begin{cases}
f_3 \equiv 0 \,\, \text{if and only if} \,\,\alpha_3=0, \\
f_4 \equiv 0 \,\,  \text{if and only if} \,\,\alpha_4=0. 
\end{cases}
\end{equation*}
}
\end{corollary}

\subsubsection{The case where $f_i,f_{i+1},f_{i+3}$ have a pole at $t=\infty$}

\begin{proposition}
\label{prop:t=inf-f0,f1,f3-(1)}
{\it
Suppose that 
for $A^{(1)}_4(\alpha_j)_{0\leq j \leq 4},$ 
there exists a solution such that 
for some $i=0,1,2,3,4,$ 
$f_i,f_{i+1},f_{i+3}$ have a pole at $t=\infty$ and $f_{i+2},f_{i+4}$ are both holomorphic at $t=\infty.$ 
Then, $n_{i}=n_{i+1}=n_{i+3}=1.$
}
\end{proposition}

\begin{proof}
By $\pi,$ we can assume that $f_0,f_1,f_3$ have a pole at $t=\infty$. 
Since $\sum_{k=0}^4 f_k=t,$ one of the following four cases occurs.
\begin{align*} 
&\mathrm{(i)} \quad n_0=n_1>n_3, &     &\mathrm{(ii)} \quad n_1=n_3>n_0, \\
&\mathrm{(iii)} \quad n_3=n_1>n_0, &   &\mathrm{(iv)} \quad n_0=n_1=n_3.
\end{align*}
\par
We first treat case (i). 
We then note that $a_{\infty,n_0}b_{\infty, n_1}\neq 0.$ 
By comparing the coefficients of the term $t^{n_0+n_1}$ in 
$$
f_0^{\prime}=f_0(f_1-f_2+f_3-f_4)+\alpha_0,
$$
we have $a_{\infty,n_0}b_{\infty, n_1}=0,$ which is a contradiction. 
In the same way, 
we can prove that neither case (ii) nor (iii) occur. 
\par
We next deal with case (iv) and show that $n_0=n_1=n_3=1.$ 
For this purpose, 
we assume that $n_0=n_1=n_3=n\geq 2.$ 
By comparing the coefficients of the term $t^{2n} $ in 
\begin{equation*}
f_0^{\prime}=f_0(f_1-f_2+f_3-f_4)+\alpha_0,
\end{equation*}
we then get 
$b_{\infty, n}+d_{\infty, n}=0.$ 
Since $\sum_{k=0}^4 f_k=t,$ it follows that $a_{\infty,n_0}=0,$ which is a contradiction. 
Therefore, we obtain 
\begin{equation*}
n_0=n_1=n_3=1.
\end{equation*}

\end{proof}

\begin{proposition}
\label{prop:t=inf-f0,f1,f3-(2)}
{\it
Suppose that 
for $A^{(1)}_4(\alpha_j)_{0\leq j \leq 4},$ 
there exists a solution such that 
for some $i=0,1,2,3,4,$ 
$f_i,f_{i+1},f_{i+3}$ all have a pole at $t=\infty$ and $f_{i+2},f_{i+4}$ are both holomorphic at $t=\infty.$ 
Then, 
\begin{equation*}
\begin{cases}
f_i 
= 
t 
+ 
(1-\alpha_i) t^{-1} + \cdots \\
f_{i+1} 
= 
t 
+ 
( 
1 
-\alpha_{i+1}-2\alpha_{i+2}+2\alpha_{i+4}
) 
t^{-1} + \cdots \\
f_{i+2} 
= 
\alpha_{i+2} 
t^{-1} + \cdots \\
f_{i+3} 
= 
- t 
+ 
( 
-1-\alpha_{i+3}-2\alpha_{i+4} 
) 
t^{-1}
+ \cdots \\
f_{i+4} 
= 
-\alpha_{i+4} 
t^{-1} + \cdots.
\end{cases}
\end{equation*}
}
\end{proposition}

\begin{proof}
By $\pi,$ we can assume that $f_0,f_1,f_3$ have a pole at $t=\infty.$ 
Then, 
it follows 
from Proposition \ref{prop:t=inf-f0,f1,f3-(1)} that 
\begin{equation*}
\begin{cases}
f_0=a_{\infty, 1}t+a_{\infty,0}+a_{\infty,-1}t^{-1}+\cdots, \\
f_1=b_{\infty, 1}t+b_{\infty,0}+b_{\infty,-1}t^{-1}+\cdots,   \,\,
f_2=c_{\infty,0}+c_{\infty,-1}t^{-1}+\cdots,  \\
f_3=d_{\infty, 1}t+d_{\infty,0}+d_{\infty,-1}t^{-1}+\cdots,  \,\,
f_4=e_{\infty,0}+e_{\infty,-1}t^{-1}+\cdots.
\end{cases}
\end{equation*}
\par
By comparing the coefficients of the term $t^2$ in 
\begin{equation*}
\begin{cases}
f_0^{\prime}=f_0(f_1-f_2+f_3-f_4)+\alpha_0 \\
f_1^{\prime}=f_1(f_2-f_3+f_4-f_0)+\alpha_1, 
\end{cases}
\end{equation*}
we have 
\begin{equation*}
\begin{cases}
b_{\infty,1}+d_{\infty,1}=0 \\
a_{\infty,1}+d_{\infty,1}=0,
\end{cases}
\end{equation*}
respectively. 
Since $\sum_{k=0}^4 f_k=t,$ 
it follows that 
\begin{equation*}
a_{\infty,1}=b_{\infty, 1}=1,\,\,d_{\infty,1}=-1.
\end{equation*}
By comparing the coefficients of the term $t$ in 
\begin{equation*}
\begin{cases}
f_2^{\prime}=f_2(f_3-f_4+f_0-f_1)+\alpha_2, \\
f_4^{\prime}=f_4(f_0-f_1+f_2-f_3)+\alpha_4, 
\end{cases}
\end{equation*}
we then obtain 
\begin{equation*}
c_{\infty,0}=e_{\infty,0}=0,
\end{equation*}
respectively. 
Moreover, comparing the constant terms in 
\begin{equation*}
\begin{cases}
f_2^{\prime}=f_2(f_3-f_4+f_0-f_1)+\alpha_2, \\
f_4^{\prime}=f_4(f_0-f_1+f_2-f_3)+\alpha_4, 
\end{cases}
\end{equation*} 
we have
\begin{equation*}
c_{\infty,-1}=\alpha_2, \,\,e_{\infty,-1}=-\alpha_4,
\end{equation*} 
respectively. 
\par
By comparing the coefficients of the term $t$ in 
\begin{equation*}
\begin{cases}
f_0^{\prime}=f_0(f_1-f_2+f_3-f_4)+\alpha_0, \\
f_1^{\prime}=f_1(f_2-f_3+f_4-f_0)+\alpha_1, 
\end{cases}
\end{equation*}
we obtain
\begin{equation*}
\begin{cases}
b_{\infty,0}+d_{\infty,0}=0, \\
a_{\infty,0}+d_{\infty,0}=0,
\end{cases}
\end{equation*} 
respectively. 
Since $\sum_{k=0}^4 f_k=t$ and $c_{\infty,0}=e_{\infty,0}=0,$ it follows that 
\begin{equation*}
a_{\infty,0}=b_{\infty,0}=d_{\infty,0}=0.
\end{equation*}
By comparing the constant terms in 
\begin{equation*}
\begin{cases}
f_0^{\prime}=f_0(f_1-f_2+f_3-f_4)+\alpha_0 \\
f_1^{\prime}=f_1(f_2-f_3+f_4-f_0)+\alpha_1, 
\end{cases}
\end{equation*}
we then have
\begin{equation*}
\begin{cases}
a_{\infty,-1}=-2\alpha_2+2\alpha_4+\alpha_0-1 \\
d_{\infty,-1}=-\alpha_0+\alpha_1+3\alpha_2-3\alpha_4.
\end{cases}
\end{equation*}
respectively. 
Since $\sum_{k=0}^4 f_k=t,$ 
it follows that 
\begin{equation*}
b_{\infty,-1}=-\alpha_1-2\alpha_2+2\alpha_4+1.
\end{equation*}

\end{proof}

\begin{proposition}
\label{prop:t=inf-f0,f1,f3-(3)}
{\it
Suppose that 
for $A^{(1)}_4(\alpha_j)_{0\leq j \leq 4},$ 
there exists a solution such that 
for some $i=0,1,2,3,4,$ 
$f_i,f_{i+1},f_{i+3}$ all have a pole at $t=\infty$ and $f_{i+2},f_{i+4}$ are both holomorphic at $t=\infty.$ 
It is then unique. 
}
\end{proposition}

\begin{proof}
By $\pi,$ 
we can assume that $f_0,f_1,f_3$ have a pole at $t=\infty.$ 
From Proposition \ref{prop:t=inf-f0,f1,f3-(2)}, 
we can set 
\begin{equation*}
\begin{cases}
f_0=t+a_{\infty, -1}t^{-1}+\sum^{\infty}_{k=2 }a_{\infty, -k}t^{-k}, \\
f_1=t+b_{\infty, -1}t^{-1}+\sum^{\infty}_{k=2 }b_{\infty, -k}t^{-k},  \\
f_2=c_{\infty,-1}t^{-1}+\sum^{\infty}_{k=2 }c_{\infty, -k}t^{-k},  \\
f_3=-t+d_{\infty, -1}t^{-1}+\sum^{\infty}_{k=2 }d_{\infty, -k}t^{-k},  \\
f_4=e_{\infty, -1}t^{-1}+\sum^{\infty}_{k=2 }e_{\infty, -k}t^{-k}, 
\end{cases}
\end{equation*}
where $a_{\infty, -1}, b_{\infty, -1}, c_{\infty, -1}, d_{\infty, -1}, e_{\infty, -1}$ have been determined in Proposition \ref{prop:t=inf-f0,f1,f3-(2)}. 
\par
By comparing the coefficients of the terms $t^{-k} \,\,(k \geq 1)$ in
\begin{equation*}
\begin{cases}  
f_2^{\prime}=f_2(f_3-f_4+f_0-f_1)+\alpha_2 \\  
f_4^{\prime}=f_4(f_0-f_1+f_2-f_3)+\alpha_4,  
\end{cases}
\end{equation*}
we get
\begin{equation*}
\begin{cases}
c_{\infty, -(k+1)}=
c_{\infty, -(k-1)}(k-1)
+\sum_{l,m} c_{\infty, -l}(d_{\infty, -m}-e_{\infty, -m}+a_{\infty, -m}-b_{\infty, -m}) \\
e_{\infty, -(k+1)}
=
-e_{\infty, -(k-1)}(k-1)
-\sum_{l,m} e_{\infty, -l}(a_{\infty, -m}-e_{\infty, -m}+c_{\infty, -m}-d_{\infty, -m}), 
\end{cases}
\end{equation*}
where the sums extend over the positive integers $l$ and $m$ such that $k+m=k.$ 
Therefore, 
the coefficients 
$c_{\infty, -k},e_{\infty, -k} \,\,(k \geq 1)$ 
are inductively determined. 
\par
By comparing the coefficients of the terms $t^{-k} \,\,(k \geq 1)$ in 
\begin{equation*}
\begin{cases}
f_0^{\prime}=f_0(f_1-f_2+f_3-f_4)+\alpha_0 \\
f_1^{\prime}=f_1(f_2-f_3+f_4-f_0)+\alpha_1 \\    
f_3^{\prime}=f_3(f_4-f_0+f_1-f_2)+\alpha_3,
\end{cases}
\end{equation*}
we get
\begin{equation*}
\begin{cases}
b_{\infty, -(k+1)}+d_{\infty, -(k+1)}
=c_{\infty,-(k+1)}+e_{\infty, -(k+1)}
-a_{\infty, -(k-1)}(k-1) \\
\hspace{40mm} -\sum_{l,m} a_{\infty, -l}(b_{\infty, -m}-c_{\infty, -m}+d_{\infty, -m}-e_{\infty, -m}) \\
-d_{\infty, -(k+1)}-a_{\infty, -(k+1)}
=-c_{\infty, -(k+1)}-e_{\infty, -(k+1)}
-b_{\infty, -(k-1)}(k-1) \\
\hspace{40mm}  -\sum_{l,m} b_{\infty, -l}(c_{\infty, -m}-d_{\infty, -m}+e_{\infty, -m}-a_{\infty, -m}) \\
-a_{\infty, -(k+1)}+b_{\infty, -(k+1)}
=
-e_{\infty, -(k+1)}+c_{\infty, -(k+1)}
-d_{k+1}(k-1) \\
\hspace{40mm}  +\sum_{l,m} d_{\infty, -l}(e_{\infty, -m}-a_{\infty, -m}+b_{\infty, -m}-c_{\infty, -m}),
\end{cases}
\end{equation*} 
where the sums extend over the positive integers $l$ and $m$ such that $l+m=k.$ 
Since $\sum_{j=0}^4 f_j=t,$ 
it follows that 
\begin{equation*}
a_{\infty, -(k+1)}+b_{\infty, -(k+1)}+d_{\infty, -(k+1)}=-c_{\infty,-(k+1)}-e_{\infty, -(k+1)}. 
\end{equation*}
Therefore, 
the coefficients 
$a_{\infty, -k},b_{\infty, -k}, d_{\infty, -k} \,\,(k \geq 1)$ 
are inductively determined.

\end{proof}

From the proof of Proposition \ref{prop:t=inf-f0,f1,f3-(3)}, 
we can show the following proposition:

\begin{corollary}
\label{coro:t=inf-f0,f1,f3-identically-zero}
{\it
Suppose that 
for $A^{(1)}_4(\alpha_j)_{0\leq j \leq 4},$ 
there exists a solution such that 
for some $i=0,1,2,3,4,$ 
$f_i,f_{i+1},f_{i+3}$ all have a pole at $t=\infty$ and $f_{i+2},f_{i+4}$ are both holomorphic at $t=\infty.$ 
Then, 
\begin{equation*}
\begin{cases}
f_{i+2} \equiv 0 \,\,\text{if and only if} \,\,\alpha_{i+2}=0, \\
f_{i+4} \equiv 0 \,\, \text{if and only if} \,\,\alpha_{i+4}=0. 
\end{cases}
\end{equation*} 
}
\end{corollary}

\subsection{The case where four of $f_i \,(\leq i \leq 4)$ have a pole at $t=\infty$}
By $\pi,$ 
we have only to consider the following case: 
for some $i=0,1,2,3,4,$ 
$f_i,$ $f_{i+1},$ $f_{i+2},$ $f_{i+3}$ all have a pole at $t=\infty$ and 
$f_{i+4}$ is holomorphic at $t=\infty.$

\begin{proposition}
\label{prop:t=inf-four}
{\it
For $A_4^{(1)}(\alpha_j)_{0\leq j \leq 4},$ 
there exists no solution such that 
$f_i, f_{i+1}, f_{i+2}, f_{i+3}$ all have a pole at $t=\infty$ and 
$f_{i+4}$ is holomorphic at $t=\infty.$ 
}
\end{proposition}

\begin{proof}
By $\pi,$ we can assume that 
$f_0,f_1,f_2,f_3$ have a pole at $t=\infty.$ 
Since $\sum_{k=0}^4 f_k=t,$ 
it follows that one of the following eleven cases occurs: 
\begin{align*}
&\mathrm{(i)} \quad  
n_0=n_1>
\left\{
\begin{array}{l}
n_2 \\
n_3 
\end{array}
\right\}
\geq 1 & 
&\mathrm{(ii)}  \quad
n_0=n_2>
\left\{
\begin{array}{l}
n_1 \\
n_3
\end{array}
\right\}
\geq 1 \\
&\mathrm{(iii)} \quad
n_0=n_3>
\left\{
\begin{array}{l}
n_1 \\
n_2 
\end{array}
\right\}
\geq 1 & 
&\mathrm{(iv)}  \quad
n_1=n_2>
\left\{
\begin{array}{l}
n_0 \\
n_3 
\end{array}
\right\}
\geq 1
\\
&\mathrm{(v)}  \quad
n_1=n_3>
\left\{
\begin{array}{l}
n_0 \\
n_2 
\end{array}
\right\}
\geq 1 & 
&\mathrm{(vi)}  \quad
n_2=n_3>
\left\{
\begin{array}{l}
n_0 \\
n_1 
\end{array}
\right\}
\geq 1 \\
&\mathrm{(vii)}   \quad n_0=n_1=n_2>n_3 \geq 1  & 
&\mathrm{(viii)}   \quad n_0=n_1=n_3>n_2 \geq 1  \\
&\mathrm{(ix)}  \quad n_1=n_2=n_3>n_0 \geq 1  & 
&\mathrm{(x)}   \quad  n_2=n_3=n_0>n_1 \geq 1  \\
&\mathrm{(xi)}  \quad n_0=n_1=n_2=n_3 \geq 1. & &
\end{align*} 
\par
We first treat case (i). Then, 
we note that $a_{\infty, n_0}b_{\infty, n_1}\neq 0.$ 
Therefore, 
comparing the coefficients of the term $t^{n_0+n_1}$ in 
$$
f_0^{\prime}=f_0(f_1-f_2+f_3-f_4)+\alpha_0
$$
we have $0=a_{\infty,n_0}b_{\infty, n_1},$ which is a contradiction. 
In the same way, 
we can prove that 
none of cases (ii), (iii), (iv), (v), (vi) happens. 
\par
We next deal with cases (vii). 
For this purpose, 
comparing the coefficients of the terms $t^{n_0+n_1}=t^{n_0+n_2},$ $t^{n_1+n_2}=t^{n_1+n_0}$ in 
\begin{equation*}
\begin{cases}
f_0^{\prime}=f_0(f_1-f_2+f_3-f_4)+\alpha_0 \\
f_1^{\prime}=f_1(f_2-f_3+f_4-f_0)+\alpha_1, \\
\end{cases}
\end{equation*}
we get 
\begin{equation*}
\begin{cases}
b_{\infty, n_1}-c_{\infty, n_2}=0 \\
c_{\infty, n_2}-a_{\infty, n_0}=0.
\end{cases}
\end{equation*}
Since $\sum_{k=0}^4 f_k=t,$ 
it follows that 
\begin{equation*}
a_{\infty, n_0}=b_{\infty, n_1}=c_{\infty, n_2}=0,
\end{equation*}
which is a contradiction. 
In the same way, 
we can show that case (ix) does not occur. 
\par
We deal with case (viii): $n_0=n_1=n_3=n>n_2 \geq 1$ 
For this purpose, 
comparing the coefficients of the term $t^{n+n_2}$ in 
\begin{equation*}
f_2^{\prime}=f_2(f_3-f_4+f_0-f_1)+\alpha_2, 
\end{equation*}
we have
\begin{equation*}
d_{\infty, n}+a_{\infty, n}-b_{\infty, n}=0.
\end{equation*}
Since $\sum_{k=0}^4 f_k=t,$ 
it follows that 
\begin{equation*}
b_{\infty, n}=0,
\end{equation*}
which is a contradiction. 
In the same way, 
we can show that case (x) does not occur. 
\par
We last treat case (xi): $n_0=n_1=n_2=n_3=n \geq 1.$ 
For this purpose, 
comparing the coefficients of the term $t^{2n}$ in
\begin{equation*}
\begin{cases}
f_0^{\prime}=f_0(f_1-f_2+f_3-f_4)+\alpha_0 \\ 
f_1^{\prime}=f_1(f_2-f_3+f_4-f_0)+\alpha_1 \\  
f_2^{\prime}=f_2(f_3-f_4+f_0-f_1)+\alpha_2 \\  
f_3^{\prime}=f_3(f_4-f_0+f_1-f_2)+\alpha_3, 
\end{cases}
\end{equation*}
we obtain 
\begin{equation*}
(*)
\begin{cases}
b_{\infty, n}-c_{\infty, n}+d_{\infty, n} &=0 \\
c_{\infty, n}-d_{\infty, n}-a_{\infty, n} &=0 \\
d_{\infty, n}+a_{\infty, n}-b_{\infty, n} &=0 \\
-a_{\infty, n}+b_{\infty, n}-c_{\infty, n} &=0,
\end{cases}
\end{equation*}
respectively.
We assume that $n_0=n_1=n_2=n_3=n \geq 2.$
Since $\sum_{k=0}^4 f_k=t,$ 
it follows that $a_{\infty,n}+b_{\infty, n}+c_{\infty,n}+d_{\infty, n}=0,$ 
which implies that 
\begin{equation*}
a_{\infty, n}=-2c_{\infty, n}, \,\, b_{\infty, n}=c_{\infty, n}, \,\,d_{\infty, n}=3c_{\infty, n}.
\end{equation*}
Since $\sum_{k=0}^4 f_k=t,$
it follows that 
\begin{equation*}
c_{\infty, n}=0,
\end{equation*}
which is a contradiction.
\par
We assume that $n_0=n_1=n_2=n_3 = 1.$ 
The first equation of $(*)$ implies that 
\begin{equation*}
a_{\infty,1}+2c_{\infty, 1}=1,
\end{equation*}
because $\sum_{k=0}^4 f_k=t.$ 
The second and third equations of $(*)$ then imply that 
\begin{equation*}
d_{\infty,1}=3c_{\infty, 1}-1, \,\,b_{\infty,1}=c_{\infty,1}.
\end{equation*}  
Since $\sum_{k=0}^4 f_k=t,$ it follows that 
\begin{equation*}
1 = a_{\infty,1} + b_{\infty,1} +c_{\infty,1} +d_{\infty,1} =3c_{\infty,1}.
\end{equation*}
Therefore, 
we obtain 
\begin{equation*}
c_{\infty,1}= 1/3, \,\, d_{\infty,1}=0, 
\end{equation*}
which is a contradiction.

\end{proof}

\subsection{The case where all of $f_i \,(\leq i \leq 4)$ have a pole at $t=\infty$}

\begin{proposition}
\label{prop:t=inf-all-(1)}
{\it
Suppose that 
for $A_4^{(1)}(\alpha_j)_{0\leq j \leq 4},$ 
there exists a solution such that 
$f_0, f_1,f_2,f_3,f_4$ all have a pole at $t=\infty.$ 
Then, 
$n_0=n_1=n_2=n_3=n_4=1.$ 
}
\end{proposition}

\begin{proof}
Since $\sum_{k=0}^4 f_k=t,$ 
we can assume that 
one of the following twelve cases occurs.
\begin{align*}
&\mathrm{(i)} \quad n_0=n_1> 
\left\{
\begin{array}{l}
n_2 \\
n_3 \\
n_4
\end{array}
\right\}
\geq 1, & 
&\mathrm{(ii)} \quad n_0=n_2> 
\left\{
\begin{array}{l}
n_1 \\
n_3 \\
n_4
\end{array}
\right\}
\geq 1,  \\
&\mathrm{(iii)} \quad n_0=n_3> 
\left\{
\begin{array}{l}
n_1 \\
n_2 \\
n_4
\end{array}
\right\}
\geq 1, & 
&\mathrm{(iv)} \quad n_0=n_4> 
\left\{
\begin{array}{l}
n_1 \\
n_2 \\
n_3
\end{array}
\right\}
\geq 1,  \\
&\mathrm{(v)} \quad 
n_0=n_1=n_2>
\left\{
\begin{array}{l}
n_3 \\
n_4
\end{array}
\right\}
\geq 1, &  
&\mathrm{(vi)}
\quad 
n_0=n_1=n_3>
\left\{
\begin{array}{l}
n_2 \\
n_4
\end{array}
\right\}
\geq 1,  \\
&\mathrm{(vii)} \quad 
n_0=n_1=n_4>
\left\{
\begin{array}{l}
n_2 \\
n_3
\end{array}
\right\}
\geq 1, & 
&\mathrm{(viii)}
\quad 
n_0=n_2=n_3>
\left\{
\begin{array}{l}
n_1 \\
n_4
\end{array}
\right\}
\geq 1,  \\
\end{align*}
\begin{align*}
&\mathrm{(ix)}
\quad
n_0=n_2=n_4>
\left\{
\begin{array}{l}
n_1 \\
n_3
\end{array}
\right\}
\geq 1, & 
&\mathrm{(x)}
\quad
n_0=n_3=n_4>
\left\{
\begin{array}{l}
n_1 \\
n_2
\end{array}
\right\}
\geq 1, 
\\
&\mathrm{(xi)}
\quad
n_0=n_1=n_2=n_3>
n_4
\geq 1, & 
&\mathrm{(xii)}
\quad
n_0=n_1=n_2=n_3=
n_4
\geq 1. 
\end{align*}
\par
We first treat case (i). 
Then, 
comparing the coefficients of the term $t^{n_0+n_1}$ in 
$$
f_0^{\prime}=f_0(f_1-f_2+f_3-f_4)+\alpha_0,
$$
we have $0=a_{\infty,n_0}b_{\infty,n_1},$ 
which is a contradiction. 
In the same way, 
we can prove that 
none of cases (ii), (iii), (iv) occurs. 
\par
We next deal with case (v). 
For this purpose, 
comparing the coefficients of the terms $t^{n_0+n_1}=t^{n_0+n_2},$ $t^{n_1+n_2}=t^{n_1+n_0}$ in 
\begin{equation*}
\begin{cases}
f_0^{\prime}=f_0(f_1-f_2+f_3-f_4)+\alpha_0, \\ 
f_1^{\prime}=f_1(f_2-f_3+f_4-f_0)+\alpha_1, 
\end{cases}
\end{equation*}  
we have $b_{\infty,n_1}-c_{\infty,n_2}=0,$ $c_{\infty,n_2}-a_{\infty,n_0}=0,$ 
respectively. 
Since $\sum_{k=0}^4 f_k,$ 
it follows that $a_{\infty,n_0}+b_{\infty,n_1}+c_{\infty,n_2}=0,$ 
which implies that $a_{\infty,n_0}=b_{\infty,n_1}=c_{\infty,n_2}=0.$ 
This is impossible. 
In the same way, 
we can show that 
none of cases (v), $\ldots,$ (x) occurs.  
\par
We treat case (xi): $n_0=n_1=n_2=n_3=n>n_4\geq 1.$ 
By comparing the coefficients of the term $t^{2n},$ or $t^{n+n_4}$ in
\begin{equation*}
\begin{cases}
f_0^{\prime}=f_0(f_1-f_2+f_3-f_4)+\alpha_0 \\ 
f_1^{\prime}=f_1(f_2-f_3+f_4-f_0)+\alpha_1 \\  
f_2^{\prime}=f_2(f_3-f_4+f_0-f_1)+\alpha_2 \\  
f_3^{\prime}=f_3(f_4-f_0+f_1-f_2)+\alpha_3 \\
f_4^{\prime}=f_4(f_0-f_1+f_2-f_3)+\alpha_4,  
\end{cases}
\end{equation*}
we then have
\begin{equation*}
(*)
\begin{cases}
b_{\infty, n} -c_{\infty, n}+d_{\infty, n} &=0 \\
c_{\infty, n}-d_{\infty, n}-a_{\infty, n}  &=0  \\
d_{\infty, n}+a_{\infty, n}-b_{\infty, n}  &=0 \\
-a_{\infty, n}+b_{\infty, n}-c_{\infty, n} &=0 \\
a_{\infty, n}-b_{\infty, n}+c_{\infty, n}-d_{\infty, n}  &=0.
\end{cases}
\end{equation*}
Since $\sum_{k=0}^4 f_k=t,$ 
it follows that 
\begin{equation}
\label{relation-(1)}
a_{\infty, n}+b_{\infty, n}+c_{\infty, n}+d_{\infty, n}=0.
\end{equation}
Then, 
the first equation of $(*)$ implies that 
$
a_{\infty, n}=-2c_{\infty, n}.
$
The second and third equations of $(*)$ imply that 
$
d_{\infty, n}=3c_{\infty, n}, \,\,b_{\infty, n}=c_{\infty, n},
$ 
respectively. 
Therefore, 
the equation (\ref{relation-(1)}) implies that 
$c_{\infty, n}=0,$ which is a contradiction.
\par
We last treat case (xii): $n_0=n_1=n_2=n_3=n_4=m\geq 1.$ 
For this purpose, comparing the coefficients of the term $t^{2m}$ in
\begin{equation*}
\begin{cases}
f_0^{\prime}=f_0(f_1-f_2+f_3-f_4)+\alpha_0 \\ 
f_1^{\prime}=f_1(f_2-f_3+f_4-f_0)+\alpha_1 \\  
f_2^{\prime}=f_2(f_3-f_4+f_0-f_1)+\alpha_2 \\  
f_3^{\prime}=f_3(f_4-f_0+f_1-f_2)+\alpha_3 \\
f_4^{\prime}=f_4(f_0-f_1+f_2-f_3)+\alpha_4,  
\end{cases}
\end{equation*}
we obtain
\begin{equation*}
(**)
\begin{cases}
b_{\infty, m}-c_{\infty, m}+d_{\infty, m}-e_{\infty, m}=0 \\
c_{\infty, m}-d_{\infty, m}+e_{\infty, m}-a_{\infty, m}=0 \\
d_{\infty, m}-e_{\infty,m}+a_{\infty, m}-b_{\infty, m}=0 \\ 
e_{\infty, m}-a_{\infty, m}+b_{\infty, m}-c_{\infty, m}=0 \\
a_{\infty, m}-b_{\infty, m}+c_{\infty, m}-d_{\infty, m}=0,
\end{cases}
\end{equation*}
respectively. 
This system of equations is expressed by 
$A \mathbf{u}=\mathbf{0}, $ 
where 

\begin{equation*}
A=
\left(
\begin{array}{ccccc}
0 & 1 & -1 & 1 & -1 \\
-1 & 0 & 1 & -1 & 1 \\
1 & -1 & 0 & 1 & -1 \\
-1 & 1 & -1 & 0 & 1 \\
1 & -1 & 1 & -1 & 0 \\
\end{array}
\right), \,\,
\mathbf{u}={}^t(a_{\infty,m}, b_{\infty,m},c_{\infty,m},d_{\infty,m},e_{\infty,m}) 
\end{equation*}
Since the rank of $A$ 
is four, 
it follows that 
\begin{equation*}
(a_{\infty,m}, b_{\infty,m},c_{\infty,m},d_{\infty,m},e_{\infty,m}) 
= 
\alpha \,\,
(1,1,1,1,1) \in \ker A, 
\end{equation*}
for some $\alpha \in \C\setminus\{0\}.$ 
Since $\sum_{k=0}^4 f_k=t,$ 
it follows that 
\begin{equation*}
n_0=n_1=n_2=n_3=n_4=1, \,\,
a_{\infty,1}=b_{\infty,1}=c_{\infty,1}=d_{\infty,1}=e_{\infty,1}=1/5.
\end{equation*}

\end{proof}

\begin{proposition}
\label{prop:t=inf-all-(2)}
{\it
Suppose that 
for $A_4^{(1)}(\alpha_j)_{0\leq j \leq 4},$ 
there exists a solution such that 
$f_0, f_1,f_2,f_3,f_4$ all have a pole at $t=\infty.$ 
Then, 
\begin{equation*}
\begin{cases}
f_0 
= 
1/5t 
+ 
( 
3\alpha_1+\alpha_2-\alpha_3-3\alpha_4
) 
t^{-1}  
+ \cdots \\
f_1 
= 
1/5t 
+ 
( 
3\alpha_2+\alpha_3-\alpha_4-3\alpha_0 
) 
t^{-1} + \cdots \\
f_2 
= 
1/5
t 
+ 
( 
3\alpha_3+\alpha_4-\alpha_0-3\alpha_1 
) 
t^{-1} + \cdots \\
f_3 
= 
1/5t 
+ 
( 
3\alpha_4+\alpha_0-\alpha_1-3\alpha_2 
) 
t^{-1} + \cdots \\
f_4 
= 
1/5t 
+ 
( 
3\alpha_0+\alpha_1-\alpha_2-3\alpha_3 
) 
t^{-1} + \cdots. 
\end{cases}
\end{equation*}
}
\end{proposition}

\begin{proof}
From Proposition \ref{prop:t=inf-all-(2)} and its proof, 
it follows that 
\begin{equation}
\begin{cases}
f_0=1/5 t +a_{\infty,0}+a_{\infty,-1}t^{-1}+\cdots, \\
f_1=1/5 t +b_{\infty,0}+b_{\infty,-1}t^{-1}+\cdots,   \\
f_2=1/5 t +c_{\infty,0}+c_{\infty,-1}t^{-1}+\cdots,  \\
f_3=1/5 t +d_{\infty,0}+d_{\infty,-1}t^{-1}+\cdots,  \\
f_4=1/5 t +e_{\infty,0}+e_{\infty,-1}t^{-1}+\cdots.
\end{cases}
\end{equation}
then, 
comparing the coefficients of the term $t$ in
\begin{equation*}
\begin{cases}
f_0^{\prime}=f_0(f_1-f_2+f_3-f_4)+\alpha_0 \\ 
f_1^{\prime}=f_1(f_2-f_3+f_4-f_0)+\alpha_1 \\  
f_2^{\prime}=f_2(f_3-f_4+f_0-f_1)+\alpha_2 \\  
f_3^{\prime}=f_3(f_4-f_0+f_1-f_2)+\alpha_3 \\
f_4^{\prime}=f_4(f_0-f_1+f_2-f_3)+\alpha_4,  
\end{cases}
\end{equation*}
we obtain
\begin{equation*}
\begin{cases}
b_{\infty,0}-c_{\infty,0}+d_{\infty,0}-e_{\infty,0}=0 \\
c_{\infty,0}-d_{\infty,0}+e_{\infty,0}-a_{\infty,0}=0 \\
d_{\infty,0}-e_{\infty,0}+a_{\infty,0}-b_{\infty,0}=0 \\ 
e_{\infty,0}-a_{\infty,0}+b_{\infty,0}-c_{\infty,0}=0 \\
a_{\infty,0}-b_{\infty,0}+c_{\infty,0}-d_{\infty,0}=0. 
\end{cases}
\end{equation*}
Since the rank of 
\begin{equation*}
A=
\left(
\begin{array}{ccccc}
0 & 1 & -1 & 1 & -1 \\
-1 & 0 & 1 & -1 & 1 \\
1 & -1 & 0 & 1 & -1 \\
-1 & 1 & -1 & 0 & 1 \\
1 & -1 & 1 & -1 & 0 \\
\end{array}
\right)
\end{equation*}
is four, 
it follows that 
\begin{equation*}
(a_{\infty,0}, b_{\infty,0},c_{\infty,0},d_{\infty,0},e_{\infty,0}) 
= 
\beta \,\,
(1,1,1,1,1) \in \ker A, 
\end{equation*}
for some $\beta \in \mathbb{C}.$
Since $\sum_{k=0}^4 f_k=t,$ 
it follows that 
\begin{equation*}
a_{\infty,0}=b_{\infty,0}=c_{\infty,0}=d_{\infty,0}=e_{\infty,0}=\beta=0.
\end{equation*}
By comparing the constant terms in
\begin{equation*}
\begin{cases}
f_0^{\prime}=f_0(f_1-f_2+f_3-f_4)+\alpha_0 \\ 
f_1^{\prime}=f_1(f_2-f_3+f_4-f_0)+\alpha_1 \\  
f_2^{\prime}=f_2(f_3-f_4+f_0-f_1)+\alpha_2 \\  
f_3^{\prime}=f_3(f_4-f_0+f_1-f_2)+\alpha_3 \\
f_4^{\prime}=f_4(f_0-f_1+f_2-f_3)+\alpha_4,  
\end{cases}
\end{equation*}
we obtain
\begin{equation*}
\begin{cases}
1=b_{\infty,-1}-c_{\infty,-1}+d_{\infty,-1}+e_{\infty,-1}+5\alpha_0 \\
1=c_{\infty,-1}-d_{\infty,-1}+e_{\infty,-1}-a_{\infty,-1}+5\alpha_1 \\
1=d_{\infty,-1}-e_{\infty,-1}+a_{\infty,-1}-b_{\infty,-1}+5\alpha_2 \\ 
1=e_{\infty,-1}-a_{\infty,-1}+b_{\infty,-1}-c_{\infty,-1}+5\alpha_3 \\
1=a_{\infty,-1}-b_{\infty,-1}+c_{\infty,-1}-d_{\infty,-1}+5\alpha_4. 
\end{cases}
\end{equation*}
Since $\sum_{k=0}^4 f_k=t,$ 
it follows that 
\begin{equation*}
a_{\infty,-1}+b_{\infty,-1}+
c_{\infty,-1}+
d_{\infty,-1}+
e_{\infty,-1}=0.
\end{equation*}
Therefore, 
we get 
\begin{equation*}
\begin{cases}
a_{\infty,-1}=3\alpha_1+\alpha_2-\alpha_3-3\alpha_4, \\
b_{\infty,-1}=3\alpha_2+\alpha_3-\alpha_4-3\alpha_0, \\
c_{\infty,-1}=3\alpha_3+\alpha_4-\alpha_0-3\alpha_1, \\
d_{\infty,-1}=3\alpha_4+\alpha_0-\alpha_1-3\alpha_2, \\
e_{\infty,-1}=3\alpha_0+\alpha_1-\alpha_2-3\alpha_3.  
\end{cases}
\end{equation*}

\end{proof}

\begin{proposition}
\label{prop:t=inf-all-(3)}
{\it
Suppose that 
for $A^{(1)}_4(\alpha_j)_{0\leq j \leq 4},$ 
there exists a solution such that 
$f_0, f_1,f_2,f_3,f_4$ all have a pole at $t=\infty.$ 
It is then unique. 
}
\end{proposition}

\begin{proof}
By Proposition \ref{prop:t=inf-all-(2)}, 
we set 
\begin{equation*}
\begin{cases}
f_0=1/5t+a_{\infty,-1}t^{-1}+\sum_{k=2}^{\infty}a_{\infty, -k}t^{-k}, \\
f_1=1/5t+b_{\infty,-1}t^{-1}+\sum_{k=2}^{\infty}b_{\infty, -k}t^{-k},  \\
f_2=1/5t+c_{\infty,-1}t^{-1}+\sum_{k=2}^{\infty}c_{\infty, -k}t^{-k}, \\  
f_3=1/5t+d_{\infty,-1}t^{-1}+\sum_{k=2}^{\infty}d_{\infty, -k}t^{-k},  \\
f_4=1/5t+e_{\infty,-1}t^{-1}+\sum_{k=2}^{\infty}e_{\infty, -k}t^{-k}, 
\end{cases}
\end{equation*}
where $a_{\infty,-1}, b_{\infty,-1}, c_{\infty,-1}, d_{\infty,-1}, e_{\infty,-1}$ have been determined in Proposition \ref{prop:t=inf-all-(2)}.
\par
By comparing the coefficients of the terms $t^{-k}\,\,(k\geq 1)$ in
\begin{equation*}
\begin{cases}
f_0^{\prime}=f_0(f_1-f_2+f_3-f_4)+\alpha_0 \\ 
f_1^{\prime}=f_1(f_2-f_3+f_4-f_0)+\alpha_1 \\  
f_2^{\prime}=f_2(f_3-f_4+f_0-f_1)+\alpha_2 \\  
f_3^{\prime}=f_3(f_4-f_0+f_1-f_2)+\alpha_3 \\
f_4^{\prime}=f_4(f_0-f_1+f_2-f_3)+\alpha_4,  
\end{cases}
\end{equation*}
we get
\begin{align*}
b_{\infty,-(k+1)}-c_{\infty,-(k+1)}+d_{\infty,-(k+1)}-e_{\infty,-(k+1)}
&=-5(k-1)a_{\infty, -(k-1)}
\\
\hspace{40mm}
&-5\sum_{l,m}  
a_{\infty, -l}(b_{\infty,-m}-c_{\infty,-m}+d_{\infty,-m}-e_{\infty,-m})  
\end{align*}
\begin{align*}
c_{\infty,-(k+1)}-d_{\infty,-(k+1)}+e_{\infty,-(k+1)}-a_{\infty,-(k+1)}
&=-5(k-1)b_{\infty, -(k-1)} \\
\hspace{40mm}&-5\sum_{l,m}
b_{\infty, -l}(c_{\infty,-m}-d_{\infty,-m}+e_{\infty,-m}-a_{\infty,-m})  
\end{align*}
\begin{align*}
d_{\infty,-(k+1)}-e_{\infty,-(k+1)}+a_{\infty,-(k+1)}-b_{\infty,-(k+1)}  
&=-5(k-1)c_{\infty, -(k-1)}            \\
\hspace{40mm}
&
-5\sum_{l,m}
c_{\infty,-l}(d_{\infty,-m}-e_{\infty,-m}+a_{\infty,-m}-b_{\infty,-m})  
\end{align*}
\begin{align*}
e_{\infty,-(k+1)}-a_{\infty,-(k+1)}+b_{\infty,-(k+1)}-c_{\infty,-(k+1)}
&=-5(k-1)d_{\infty,-(k-1)}  \\
\hspace{40mm}
&-5\sum_{l,m}
d_{\infty,-l}(e_{\infty,-m}-a_{\infty,-m}+b_{\infty,-m}-c_{\infty,-m})  
\end{align*}
\begin{align*}
a_{\infty,-(k+1)}-b_{\infty,-(k+1)}+c_{\infty,-(k+1)}-d_{\infty,-(k+1)}
&=-5(k-1)e_{\infty, -(k-1)}  \\
\hspace{40mm}
&-5\sum_{l,m}
e_{\infty,-l}(a_{\infty,-m}-b_{\infty,-m}+c_{\infty,-m}-d_{\infty,-m}) , 
\end{align*}
where the sums extend over the positive integers $l$ and $m$ such that $l+m=k.$ 
Since the rank of 
\begin{equation*}
\left(
\begin{array}{ccccc}
0 & 1 & -1 & 1 & -1 \\
-1 & 0 & 1 & -1 & 1 \\
1 & -1 & 0 & 1 & -1 \\
-1 & 1 & -1 & 0 & 1 \\
1 & -1 & 1 & -1 & 0 \\
\end{array}
\right)
\end{equation*}
is four, 
$b_{\infty,-(k+1)},c_{\infty,-(k+1)},d_{\infty,-(k+1)},e_{\infty,-(k+1)}$ can be expressed 
by 
$$
a_{\infty,-i} \,(1\leq i \leq k+1), \, 
b_{\infty, j}, \, 
c_{\infty, j}, \,
d_{\infty, j} \,
e_j{\infty, j} \,(\leq j \leq k). 
$$
Since $\sum_{k=0}^4 f_k=t,$ 
it follows that 
\begin{equation*}
a_{\infty, -(k+1)}+b_{\infty, -(k+1)}+c_{\infty, -(k+1)}+d_{\infty, -(k+1)}+e_{\infty, -(k+1)}=0.
\end{equation*}
Therefore, 
the coefficients 
$a_{\infty, -k},b_{\infty, -k}, c_{\infty, -k},d_{\infty, -k},e_{\infty, -k} \,\,(k \geq 1)$ 
are inductively determined. 

\end{proof}

\subsection{Summary}

\begin{proposition}
\label{prop:a4inf}
{\it
Suppose that 
for $A_4^{(1)}(\alpha_j)_{0 \leq j \leq 4},$ 
there exists a meromorphic solution at $t=\infty.$ 
One of the following then occurs. 
\newline
Type A (1): \quad for some $i=0,1,2,3,4,$ 
$f_i$ has a pole at $t=\infty$ and $f_{i+1},f_{i+2},f_{i+3},f_{i+4}$ are all holomorphic at $t=\infty,$ and 
$f_i, f_{i+1},f_{i+2},f_{i+3},f_{i+4}$ are uniquely expanded as follows:
\begin{equation*}
\begin{cases}
f_i 
= 
t 
+ 
( 
-\alpha_{i+1}+\alpha_{i+2}-\alpha_{i+3} 
+\alpha_{i+4} 
) 
t^{-1} 
+ 
\cdots \\
f_{i+1} 
= 
\alpha_{i+1} t^{-1} + \cdots \\
f_{i+2} 
= 
-\alpha_{i+2} 
t^{-1} + \cdots \\
f_{i+3} 
= 
\alpha_{i+3} 
t^{-1} + \cdots \\
f_{i+4} 
= 
- 
\alpha_{i+4}  
t^{-1} + \cdots. 
\end{cases}
\end{equation*}
Type A (2): \quad for some $i=0,1,2,3,4,$ $f_i, f_{i+1}, f_{i+3}$ have a pole at $t=\infty$ 
and $f_{i+2},f_{i+4}$ are both holomorphic at $t=\infty,$ 
and 
$f_i, f_{i+1},f_{i+2},f_{i+3},f_{i+4}$ are uniquely expanded as follows:
\begin{equation*}
\begin{cases}
f_i 
= 
t 
+ 
(1-\alpha_i) t^{-1} + \cdots \\
f_{i+1} 
= 
t 
+ 
( 
1 
-\alpha_{i+1}-2\alpha_{i+2}+2\alpha_{i+4}
) 
t^{-1} + \cdots \\
f_{i+2} 
= 
\alpha_{i+2} 
t^{-1} + \cdots \\
f_{i+3} 
= 
- t 
+ 
( 
-1-\alpha_{i+3}-2\alpha_{i+4} 
) 
t^{-1}
+ \cdots \\
f_{i+4} 
= 
-\alpha_{i+4} 
t^{-1} + \cdots.
\end{cases}
\end{equation*}
Type B: \quad for some $i=0,1,2,3,4,$ $f_i, f_{i+1}, f_{i+2}$ have a pole at $t=\infty$ 
and $f_{i+3},f_{i+4}$ are both holomorphic at $t=\infty,$ 
and 
$f_i, f_{i+1},f_{i+2},f_{i+3},f_{i+4}$ are uniquely expanded as follows:
\begin{equation*}
\begin{cases}
f_i 
= 
\frac{1}{3}t 
+ 
( 
\alpha_{i+1}-\alpha_{i+2}-3\alpha_{i+3}-\alpha_{i+4} 
) 
t^{-1} + \cdots \\
f_{i+1} 
= 
\frac{1}{3}t 
+ 
( 
\alpha_{i+2}-\alpha_i-\alpha_{i+3}+\alpha_{i+4} 
) 
t^{-1} + \cdots \\
f_{i+2} 
= 
\frac{1}{3}t 
+ 
( 
\alpha_i-\alpha_{i+1}+\alpha_{i+3}+3\alpha_{i+4} 
) t^{-1}
+ \cdots \\
f_{i+3} 
= 
3\alpha_{i+3} 
t^{-1} + \cdots \\
f_{i+4} 
= 
-3\alpha_{i+4} t^{-1} + \cdots.
\end{cases}
\end{equation*}
Type C: \quad  $f_0, f_1, f_2, f_3, f_4$ all have a pole at $t=\infty,$ 
and 
$f_0, f_{1},f_{2},f_{3},f_{4}$ are uniquely expanded as follows:
\begin{equation*}
\begin{cases}
f_0 
= 
1/5t 
+ 
( 
3\alpha_1+\alpha_2-\alpha_3-3\alpha_4
) 
t^{-1}  
+ \cdots \\
f_1 
= 
1/5t 
+ 
( 
3\alpha_2+\alpha_3-\alpha_4-3\alpha_0 
) 
t^{-1} + \cdots \\
f_2 
= 
1/5
t 
+ 
( 
3\alpha_3+\alpha_4-\alpha_0-3\alpha_1 
) 
t^{-1} + \cdots \\
f_3 
= 
1/5t 
+ 
( 
3\alpha_4+\alpha_0-\alpha_1-3\alpha_2 
) 
t^{-1} + \cdots \\
f_4 
= 
1/5t 
+ 
( 
3\alpha_0+\alpha_1-\alpha_2-3\alpha_3 
) 
t^{-1} + \cdots. 
\end{cases}
\end{equation*}
}
\end{proposition}

From the uniqueness in Proposition \ref{prop:a4inf}, 
we have the following corollary. 

\begin{corollary}
\label{coro:a4odd}
{\it
Let $(f_j)_{0 \leq j \leq 4}$ be a meromorphic solution at $t=\infty$ of 
$A_4^{(1)}(\alpha_j)_{0\leq j \leq 4}$. 
Then, 
$ f_j \,(0\leq j \leq 4)$ are odd functions.
}
\end{corollary}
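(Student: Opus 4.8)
The plan is to combine the obvious $t\mapsto -t$ symmetry of the $A_4^{(1)}$ Painlev\'e equation with the uniqueness of the Laurent expansion at $t=\infty$ established in Propositions \ref{prop:a4inf2} and \ref{prop:a4inf3}. First I would check that if $(f_j)_{0\le j\le 4}$ solves $A_4(\alpha_j)_{0\le j\le 4}$, then $(g_j)_{0\le j\le 4}$ with $g_j(t):=-f_j(-t)$ solves the \emph{same} system $A_4(\alpha_j)_{0\le j\le 4}$, with the same parameters. Indeed $g_i'(t)=f_i'(-t)$, and the two sign changes cancel in the nonlinear term, so that
\[
g_i'(t)=f_i'(-t)=f_i(-t)\bigl(f_{i+1}-f_{i+2}+f_{i+3}-f_{i+4}\bigr)(-t)+\alpha_i
= g_i(t)\bigl(g_{i+1}-g_{i+2}+g_{i+3}-g_{i+4}\bigr)(t)+\alpha_i,
\]
while $\sum_{j}g_j(t)=-\sum_j f_j(-t)=-(-t)=t$. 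Since each $f_j$ is rational, so is each $g_j$.

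Next I would note that $g_j$ is singular at $t=\infty$ exactly when $f_j$ is, and then with the same pole order; hence $(f_j)$ and $(g_j)$ fall under the same one of the four Types of Proposition \ref{prop:a4inf1}, with literally the same set of indices singular at $\infty$ (so no relabelling by $\pi$ intervenes). By Proposition \ref{prop:a4inf2} the leading terms and the residues at $t=\infty$ of $f_j$ and $g_j$ agree, since these depend only on the $\alpha_j$; and by Proposition \ref{prop:a4inf3} all the remaining Laurent coefficients at $t=\infty$ are then forced inductively by one and the same recursion. Therefore $f_j$ and $g_j$ have identical Laurent expansions at $t=\infty$.

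Finally, a rational function is determined by its Laurent expansion at $t=\infty$: if a rational $r\not\equiv 0$ had vanishing Laurent series there, it would nonetheless satisfy $r(t)\sim c\,t^m$ with $c\neq 0$ and $m\in\Z$, a contradiction. Applying this to $r=f_j-g_j$ gives $f_j(t)=g_j(t)=-f_j(-t)$, i.e.\ $f_j$ is odd. There is essentially no hard step; the only points needing care are that at least one $f_j$ is genuinely singular at $t=\infty$ (forced by $\sum_j f_j=t$, so one of the four Types always applies), and that the uniqueness in Propositions \ref{prop:a4inf2}--\ref{prop:a4inf3}, though stated after normalising by $\pi$, applies verbatim here because $(f_j)$ and $(g_j)$ have exactly the same singular indices at infinity.
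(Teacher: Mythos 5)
Your argument is correct and is essentially the paper's own proof: the paper likewise observes that the system is invariant under $t\mapsto -t$, $f_j\mapsto -f_j$, that each Type at $t=\infty$ is preserved, and then invokes the uniqueness of the Laurent expansions from Propositions \ref{prop:a4inf2} and \ref{prop:a4inf3} to conclude $f_j(t)=-f_j(-t)$. You merely spell out the verification and the final step (a rational function is determined by its expansion at $\infty$) in more detail.
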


\begin{proof}
$A^{(1)}_4(\alpha_j)_{0\leq j \leq 4}$ 
is invariant under the transformation defined by 
\begin{equation*}
s_{-1}:
t 
\longrightarrow 
-t, \quad
f_j 
\longrightarrow
-f_j 
\quad
(0\leq j \leq 4).
\end{equation*}
Each of 
Type A, Type B, Type C on Proposition \ref{prop:a4inf} 
is also invariant under $s_{-1}$. 
Then 
$ f_j ( t ) = - f_j ( - t ) \,\,(0\leq j \leq 4)$, 
because the Laurent series of $f_j$ at $t=\infty$ on each of the types are unique. 
Therefore, $ f_j $ are odd functions. 
\end{proof}

\section{Meromorphic Solution at $t=c \in \mathbb{C}$}
In this section, 
we treat the meromorphic solutions at $t=c\in\mathbb{C}$ 
such that some of $f_j \,\,(0\leq j \leq 4)$ have a pole at $t=c.$  
The main results of this section were proved by Tahara \cite{Tahara}, 
who calculated the Laurent series of $f_j \,\,(0\leq j \leq 4)$ at $t=c.$

\begin{proposition}
\label{prop:a4c}
{\it
Suppose that 
for $A_4^{(1)}(\alpha_j)_{0\leq j \leq 4},$ 
there exists a meromorphic solution at $t=c\in\mathbb{C}$ 
such that  
some of $(f_j)_{0\leq j \leq 4}$ have a pole at $t=c \in \C.$ 
One of the following then occurs:
\newline
(1) \quad  for some $i=0, 1, 2, 3, 4,$ $f_i, f_{i+1}$ both have a pole at $t=c \in \C$ 
and $f_{i+2}, f_{i+3}, f_{i+4}$ are all holomorphic at $t=c,$ 
and 
$f_i, f_{i+1}, f_{i+2}, f_{i+3}, f_{i+4}$ are expanded as follows:
\begin{equation*}
\begin{cases}
f_i 
= 
(t-c)^{-1} 
+ 
\frac{c}{2} 
+ 
\left(
1+\frac{c^2}{12}-\frac{1}{3}\alpha_i-\frac{2}{3}\alpha_{i+1} 
-\frac{2}{3}\alpha_{i+3}
\right)
(t-c)  \\
\quad
+ 
\left( 
- 
\frac{1}{2} 
(q_{i+2,2}+q_{i+4,2})
+ 
\frac{c}{8}+\frac{c}{4}(\alpha_{i+2}+\alpha_{i+4}) 
\right)
(t-c)^2+\cdots   \\
f_{i+1} 
=
- 
(t-c)^{-1} 
+ 
\frac{c}{2} 
+ 
\left(
1-\frac{c^2}{12}-\frac{2}{3}\alpha_i-\frac{1}{3}\alpha_{i+1} 
-\frac{2}{3}\alpha_{i+3} 
\right) 
(t-c) \\
\quad 
+ 
\left( 
- 
\frac{1}{2} (q_{i+2,2}+q_{i+4,2}) 
- 
\frac{1}{8}c-\frac{c}{4} 
(\alpha_{i+2}+\alpha_{i+4}) 
\right)
(t-c)^2 \cdots  \\
f_{i+2} 
= 
-\alpha_{i+2}(t-c)+q_{i+2,2} (t-c)^2 + \cdots \\
f_{i+3} 
= 
\frac{\alpha_{i+3}}{3}(t-c) 
+ 
0(t-c)^2 + \cdots \\
f_{i+4} 
= 
-\alpha_{i+4}(t-c)+ q_{i+4,2} (t-c)^2 \cdots,   
\end{cases}
\end{equation*} 
where $q_{i+2,2}, q_{i+4,2}$ are arbitrary constants; 
\newline
(2) \quad 
for some $i=0,1,2,3,4,$ $f_i, f_{i+2}$ both have a pole at $t=c \in \mathbb{C}$ 
and $f_{i+1},f_{i+3}, f_{i+4}$ are all holomorphic at $t=c,$ 
and 
$f_i, f_{i+1}, f_{i+2}, f_{i+3}, f_{i+4}$ are expanded as follows:
\begin{equation*}
\begin{cases}
f_i 
= 
- 
(t-c)^{-1} 
+ 
\left( 
\frac{1}{2}c-q_{i+3,0} 
\right)  \\
\quad
+  
\left( 
\frac{1}{3} 
\left( 
2 
+ 
\alpha_{i+1}-\alpha_{i+2}-3\alpha_{i+3}-\alpha_{i+4}
\right)  
+ 
\frac{2}{3}q_{i+3,0} 
\left( 
c-q_{i+3,0}-2q_{i+4,0} 
\right) 
- 
\frac{1}{3} 
\left( 
\frac{1}{2}c-q_{i+3,0}
\right)^2 
\right) \\
\hspace{20mm}
\times(t-c) 
+ \cdots \\
f_{i+1} 
=
-\alpha_{i+1}(t-c) + \cdots   \\
f_{i +2} 
= 
(t-c)^{-1} 
+ 
\left(
\frac{1}{2}c-q_{i+4,0}
\right)  \\
\quad
+ 
\left( 
\frac{1}{3} 
(2-\alpha_i+\alpha_{i+1}-\alpha_{i+3}-3\alpha_{i+4}) 
- 
\frac{2}{3}q_{i+4,0} 
(c-2q_{i+3,0}-q_{i+4,0}) 
+ 
\frac{1}{3} 
(
\frac{1}{2}c-q_{i+4,0}
)^2 
\right) \\
\hspace{20mm}
\times(t-c) + \cdots  \\
f_{i+3} 
= 
q_{i+3,0} 
+ 
\big( 
q_{i+3,0} 
(-c+q_{i+3,0}+2 q_{i+4,0})+\alpha_{i+3} 
\big) (t-c) + \cdots \\
f_{i+4} 
= 
q_{i+4,0} 
+ 
\big( 
q_{i+4,0} 
(c-2q_{i+3,0}-q_{i+4,0})+\alpha_{i+ 4} 
\big) (t-c) + \cdots, 
\end{cases}
\end{equation*}
where $q_{i+3,0}, q_{i+4,0}$ are arbitrary constants;
\newline
(3) \quad for some $i=0,1,2,3,4,$ $f_{i+1}, f_{i+2}, f_{i+3}, f_{i+4}$ all have a pole at $t=c \in \C$ 
and $f_i$ is holomorphic at $t=c,$ 
and 
$f_i, f_{i+1}, f_{i+2}, f_{i+3}, f_{i+4}$ are expanded as follows: 
\begin{equation*}
\begin{cases} 
f_i = - \frac{\alpha_i}{3} (t-c) + \cdots   \\
f_{i+1} 
=
3(t-c)^{-1} 
+ 
\left(
\frac{c^2}{10}-\frac{2}{5}-\frac{3}{5}\alpha_i
+\frac{3}{5}\alpha_{i+2}+\frac{1}{5}\alpha_{i+3}
-\frac{1}{5}\alpha_{i+4}
\right) (t-c) + \cdots  \\
f_{i+2} 
=
(t-c)^{-1} 
+ 
\frac{c}{2}
+
\left(
\frac{c^2}{12}+\frac{2}{3}+\alpha_i+\frac{1}{3}\alpha_{i+1} 
-\frac{1}{3}\alpha_{i+3}+\frac{1}{3} \alpha_{i+4} 
\right) 
(t-c) + \cdots  \\
f_{i+3} 
= 
- 
(t-c)^{-1} 
+ \frac{ c }{ 2 } 
+ 
\left( 
-\frac{c^2}{12}+\frac{2}{3}+\alpha_i+\frac{1}{3}\alpha_{i+1} 
-\frac{1}{3}\alpha_{i+2}+\frac{1}{3}\alpha_{i+4}
\right) (t-c) + \cdots  \\
f_{i+4} 
= 
-3 
(t-c)^{-1} 
+ 
\left(
-\frac{c^2}{10}-\frac{2}{5}-\frac{3}{5}\alpha_i 
-\frac{1}{5}\alpha_{i+1}+\frac{1}{5}\alpha_{i+2} 
+\frac{3}{5} \alpha_{i+3} 
\right) (t-c) + \cdots.
\end{cases}
\end{equation*}
}
\end{proposition}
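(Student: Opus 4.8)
The plan is to run a local Painlev\'e-type analysis at $t=c$, in complete parallel with the analysis at $t=\infty$ in Propositions~\ref{prop:a4inf1} and~\ref{prop:a4inf2}, except that ``highest order'' now means ``most negative power of $t-c$''. I would first write each $f_j$ as a Laurent series in $t-c$; since $\sum_{j=0}^{4}f_j=t$ is regular at $c$, the principal parts of the $f_j$ must cancel, so no single $f_j$ can be singular at $c$ --- at least two are, and the sum of their residues vanishes. Because $\pi$ acts on the indices as $\Z/5\Z$, the possible pole loci reduce, up to $\pi$, to: a pair at cyclic distance $1$; a pair at cyclic distance $2$; a triple $\{i,i+1,i+2\}$; a triple $\{i,i+1,i+3\}$; a quadruple (one function regular); or all five.

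Next comes the leading-order balance in each equation $f_i'=f_i(f_{i+1}-f_{i+2}+f_{i+3}-f_{i+4})+\alpha_i$. Matching the order of the most singular terms forces every pole to be simple (exactly the sub-case analysis already carried out at $t=\infty$), and equating the $(t-c)^{-2}$ coefficients produces linear relations among the residues $r_j$. For the two triples these relations are inconsistent --- for $\{i,i+1,i+2\}$ the three relations add up to $0=-3$, and for $\{i,i+1,i+3\}$ they force a residue to vanish --- and for the five-function case they give $0=-5$; hence only the three configurations in the statement survive. Combined with $\sum_j r_j=0$, the residue relations then determine the residues uniquely: $(1,-1)$ on $(f_i,f_{i+1})$, $(-1,1)$ on $(f_i,f_{i+2})$, and $(3,1,-1,-3)$ on $(f_{i+1},f_{i+2},f_{i+3},f_{i+4})$. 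In particular every residue at a finite point is an integer.

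For each of the three surviving types I would then solve for the remaining coefficients order by order in $t-c$. A basic mechanism recurs: a regular $f_j$ is forced to vanish at $c$ precisely when the residue at $c$ of its own coefficient $f_{j+1}-f_{j+2}+f_{j+3}-f_{j+4}$ is nonzero, since otherwise the right-hand side of the $j$-th equation would acquire a simple pole absent from the left; this is what distinguishes, for instance, the rigid branch $f_{i+3}=\tfrac{\alpha_{i+3}}{3}(t-c)+\cdots$ in type~(1) from the genuinely free $f_{i+3}=q_{i+3,0}+\cdots$ in type~(2). The recursion then fixes all higher coefficients of every $f_j$ except at the resonant orders, where the coefficient matrix drops rank and an arbitrary constant enters --- these are the $q_{i+2,2},q_{i+4,2}$ of type~(1), the $q_{i+3,0},q_{i+4,0}$ of type~(2), and their analogues in type~(3). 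The explicit low-order coefficients displayed in the statement, including the vanishing $(t-c)^2$-coefficient of $f_{i+3}$ in type~(1), come out of the first few steps of this recursion, and at each resonant order one must still check that the inhomogeneous term lies in the range of the coefficient matrix, i.e.\ that no logarithmic term is forced.

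I expect this last, order-by-order, step to be the main obstacle: the leading-order part (ruling out the triples and the all-five case and fixing the residues) is short and essentially rigid, whereas verifying the resonance conditions and reading off the explicit coefficients in each type is a substantial bookkeeping task. Since the statement coincides with Tahara's~\cite{Tahara} description of the movable singularities of this system, one may alternatively simply invoke that reference once the pole configurations have been identified.
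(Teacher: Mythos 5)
Your proposal is methodologically sound, but note first that the paper does not actually prove this proposition: it is stated as a result of Tahara \cite{Tahara} and simply quoted, so the "paper's proof" is a citation. Your direct route --- Laurent expansion at $t=c$, cancellation of principal parts in $\sum_j f_j=t$, leading-order balance to force simple poles, and the $(t-c)^{-2}$ relations to pin down the admissible pole configurations and residues --- is correct as far as it goes. I checked the key computations: dividing the $(t-c)^{-2}$ relation $-r_j=r_j(r_{j+1}-r_{j+2}+r_{j+3}-r_{j+4})$ by $r_j$ and combining with $\sum r_j=0$ does give $0=-3$ for the triple $\{i,i+1,i+2\}$, forces $r_{i+3}=0$ for $\{i,i+1,i+3\}$, gives $0=-5$ for all five, and yields exactly the residues $(1,-1)$, $(-1,1)$ and $(3,1,-1,-3)$ in the three surviving cases; your criterion for when a regular $f_j$ must vanish at $c$ (nonvanishing residue of $f_{j+1}-f_{j+2}+f_{j+3}-f_{j+4}$) also checks out and correctly separates the rigid branch of $f_{i+3}$ in type (1) from the free constant $q_{i+3,0}$ in type (2). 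What your write-up does not deliver is the part that constitutes the actual content of the proposition beyond the pole data: the explicit $(t-c)^0$, $(t-c)^1$, $(t-c)^2$ coefficients and, crucially, the verification at each resonant order that the inhomogeneous term is compatible so that no logarithm enters and the constants $q_{\ast}$ are genuinely free. You identify this as the remaining bookkeeping but do not perform it, so as a self-contained proof the proposal is incomplete at exactly the point where the formulas in the statement are produced; as a reduction to Tahara's theorem (which is all the paper itself does), it is more than adequate, and your leading-order analysis is in fact more explicit than anything the paper records.
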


From Proposition \ref{prop:a4c}, we obtain the following corollary:

\begin{corollary}
\label{coro:res}
{\it
(1)\quad Suppose that $A^{(1)}_4(\alpha_j)_{0\leq j \leq 4}$ has a rational solution. 
For any $i=0,1,2,3,4,$ then $\Res_{t=\infty} f_i \in\mathbb{Z}.$
\newline
(2)
\quad Suppose that $A^{(1)}_4(\alpha_j)_{0\leq j \leq 4}$ has a rational solution such that 
for some $i=0,1,2,3,4,$ $f_i$ has a pole at $t=c\in\mathbb{C}\setminus\{0\}.$ 
$ f_i $ then also has a pole at $ t=- c,$ and 
$\Res_{t=c}  f_i = \Res_{ t=- c } f_i. $
\newline
(3)
\quad 
Suppose that $A^{(1)}_4(\alpha_j)_{0\leq j \leq 4}$ has a rational solution such that 
for some $i=0,1,2,3,4,$ $\Res_{t=\infty} f_i$ is an even integer. 
$f_i$ is then holomorphic at $t=0$ and  
$$
f_i = a_{i,1} t + \sum_{ j = 1 }^{ n_i } 
\left( \frac{ \varepsilon_{i,j} }{ t - c_{i,j} } + \frac{ \varepsilon_{i,j} }{ t + c_{i,j} } \right),
$$
where $ a_{i,1} = 0 , \pm 1 , \frac{ 1 }{ 3 } , \frac{ 1 }{ 5 } $ and $ \varepsilon_{i,j} = \pm 1 , \pm 3 $ 
and $c_{i,j} \neq 0$. 
\newline
(4)
\quad 
Suppose that $A^{(1)}_4(\alpha_j)_{0\leq j \leq 4}$ has a rational solution such that 
for some $i=0,1,2,3,4,$ $\Res_{t=\infty} f_i$ is an odd integer. 
$f_i$ then has a pole at $t=0$ and
$$
f_i = a_{i,1} t +  \frac{ \varepsilon_{i,0} }{ t } + \sum_{ j = 1 }^{ n_i } 
\left( \frac{ \varepsilon_{i,j} }{ t - c_{i,j} } + \frac{ \varepsilon_{i,j} }{ t + c_{i,j} } \right),
$$
where $ \varepsilon_{i,0}, \varepsilon_{i,j} = \pm 1 , \pm 3 $ and $c_{i,j} \neq 0$. 
}
\end{corollary}

\begin{proof}
Case (1) can be proved by applying the residue theorem to $f_i\,(0\leq i \leq 4).$ 
We treat case (2). For this purpose, 
let $ c \in \C \setminus \{ 0 \} $ be a pole of $ f_i $. 
Then 
it follows 
from Proposition \ref{prop:a4c} and Corollary \ref{coro:a4odd} that 
$ f_i $ has a pole at $t=c$ with the first order and is an odd function:
\begin{equation*}
f_i ( t ) =- f_i ( - t ).
\end{equation*}
Therefore, 
$ - c $ is also a pole of $ f_i $ and $ \Res_{t=c} f_i = \Res_{ - c } f_i $.
\par
We next deal with case (3). 
For this purpose, 
suppose that $f_i$ has a pole at $t=0$ 
and $ \pm c_1 , \pm c_2 , \cdots \pm c_{n_i} \in \C \setminus \{ 0 \} $ are poles of $ f_i .$ 
It then follows from the residue theorem that 
$$
-\Res_{t=\infty} f_i  = \Res_{t=0} f_i + 2 \sum_{ j = 1 }^{ n_i }  \Res_{t=c_j} f_i,
$$
which is a contradiction 
because $\Res_{t=0} f_i = \pm 1 \, \rm{or} \,  \pm 3. $ 
\par
We last treat case (4). 
For this purpose, 
suppose that $f_i $ is holomorphic at $t=0$ and 
$ \pm c_1 , \pm c_2 , \cdots \pm c_{n_i} \in \C \setminus \{ 0 \} $ are poles of $ f_i .$ 
It then follows from the residue theorem that 
$$
-\Res_{t=\infty} f_i = 2 \sum_{ j = 1 }^{ n_i }  \Res_{t=c_j} f_i, 
$$
which is a contradiction. 
\end{proof}

\section{The Laurent Series of the Auxiliary Function}
In this section, following Noumi and Yamada \cite{NoumiYamada-B}, 
we first introduce the Hamiltonian $H$ of $A_4^{(1)}(\alpha_i)_{0\leq i \leq 4}$ 
and its principal part $\hat{H}$. 
In this paper, 
we consider $\hat{H}$ as the auxiliary function 
in order to determine the rational solutions of $A_4^{(1)}(\alpha_i)_{0\leq i \leq 4}.$ 
\par
For this purpose, 
using Propositions \ref{prop:a4inf} and \ref{prop:a4c}, 
we calculate the residues of $\hat{H}$ at $t=\infty,c\in\mathbb{C}.$ 
\par
Noumi and Yamada \cite{NoumiYamada-B} defined 
the Hamiltonian 
$H$ of $A_4^{(1)}(\alpha_j)_{0\leq j \leq 4}$ 
by
\begin{align*}
\label{eqn:H}
H
&=
\hat{H}   \\
&
+\frac15 
\left(
2\alpha_1-\alpha_2+\alpha_3-2\alpha_4
\right)f_0
+\frac15
\left(
2\alpha_1+4\alpha_2+\alpha_3+3\alpha_4
\right)f_1 \\
&
-\frac15
\left(
3\alpha_1+\alpha_2-\alpha_3+2\alpha_4
\right)f_2
+\frac15
\left(
2\alpha_1-\alpha_2+\alpha_3+3\alpha_4
\right)f_3 \\
&
-\frac15
\left(
3\alpha_1+\alpha_2+4\alpha_3+2\alpha_4
\right)f_4.
\end{align*}
where 
$\hat{H}$ is called the principal part of $H$ and is defined by 
\begin{equation*}
\hat{H}
=
f_0f_1f_2+
f_1f_2f_3+
f_2f_3f_4+
f_3f_4f_0+
f_4f_0f_1.   
\end{equation*}

\subsection{The Laurent series of $\hat{H}$ at $t=\infty$}
In this subsection, 
using the meromorphic solutions at $t=\infty$ in Proposition \ref{prop:a4inf},  
we compute the residue of the principal part of the Hamiltonian, $\hat{H},$ at $t=\infty.$
\par
For this purpose, 
we first note that $\hat{H}$ has a pole of an order of at most three at $t=\infty,$ 
because Proposition \ref{prop:a4inf} shows that 
$f_i\,\,(0\leq i \leq 4)$ is holomorphic at $t=\infty$ or 
has a pole at $t=\infty$ with the first order. 
Since Corollary \ref{coro:a4odd} shows that $f_i \,\,(0\leq i \leq 4)$ are all odd functions, 
the Laurent series of $\hat{H}$ at $t=\infty$ are given by 
\begin{equation*}
\hat{H}
:=
h_{\infty,3} t^{3} +
h_{\infty,1} t + 
h_{\infty,-1}t^{-1}+
O(t^{-3}) 
\,\, 
\mathrm{at} \,\,
t=\infty.
\end{equation*}

\begin{proposition}
\label{prop:phinf}
{\it
Suppose that $A_4^{(1)}(\alpha_j)_{0\leq j \leq 4}$ has a meromorphic solution at $t=\infty.$
\newline
(1)\quad 
Assume that Type A (1) occurs: 
for some $i=0,1,2,3,4,$ 
$f_i$ has a pole at $t=\infty$ and 
$f_{i+1},f_{i+1}, f_{i+2}, f_{i+3}$ are all holomorphic at $t=\infty.$ Then, 
\begin{equation*}
h_{\infty,-1}
=
-\alpha_{i+1}\alpha_{i+2}-\alpha_{i+3}\alpha_{i+4}-\alpha_{i+4}\alpha_{i+1}.
\end{equation*}
(2)\quad 
Assume that Type A (2) occurs: 
for some $i=0,1,2,3,4,$ 
$f_i, f_{i+1},f_{i+3}$ all have a pole at $t=\infty$ and $f_{i+2},f_{i+4}$ are both holomorphic at $t=\infty.$ 
Then, 
\begin{equation*}
h_{\infty,-1}
=
-\alpha_{i+2}(\alpha_i+\alpha_{i+3})
+\alpha_{i+4}(\alpha_{i+1}+\alpha_{i+3})
+\alpha_{i+2}\alpha_{i+4}.
\end{equation*}
(3)\quad 
Assume that Type B occurs: 
for some $i=0,1,2,3,4,$ 
$f_i,f_{i+1},f_{i+2}$ all have a pole at $t=\infty$ and $f_{i+3},f_{i+4}$ are both  holomorphic at $t=\infty.$ Then,  
\begin{equation*}
h_{\infty,-1}
=
\frac13
\big\{
-
(\alpha_i-\alpha_{i+1}+\alpha_{i+3})^2
-
(\alpha_{i+2}-\alpha_i-\alpha_{i+3}+\alpha_{i+4})
(\alpha_{i+2}+\alpha_{i+4}-\alpha_{i+1})
-9\alpha_{i+3}\alpha_{i+4}
\big\}.
\end{equation*}
(4)\quad 
Assume that Type C occurs:  
$f_0,f_1,f_2,f_3,f_4$ all have a pole at $t=\infty$. Then,  
\begin{equation*}
h_{\infty,-1}
=
\frac15
(
-a_{\infty, -1}^2+a_{\infty,-1}e_{\infty,-1}-b_{\infty,-1}^2
-a_{\infty,-1}c_{\infty,-1}-c_{\infty,-1}^2
+c_{\infty,-1}d_{\infty,-1}+2d_{\infty,-1}e_{-\infty,1}
),
\end{equation*}
where 
\begin{align*}
&
a_{\infty,-1}=3\alpha_1+\alpha_2-\alpha_3-3\alpha_4, 
b_{\infty,-1}=3\alpha_2+\alpha_3-\alpha_4-3\alpha_0, 
c_{\infty,-1}=3\alpha_3+\alpha_4-\alpha_0-3\alpha_1,  \\
&
d_{\infty,-1}=3\alpha_4+\alpha_0-\alpha_1-3\alpha_2, 
e_{\infty,-1}=3\alpha_0+\alpha_1-\alpha_2-3\alpha_3.
\end{align*}
}
\end{proposition}

\subsection{The Laurent series of $\hat{H}$ at $t=c \in\mathbb{C}$}

In this subsection, 
using the meromorphic solutions at $t=c\in\mathbb{C}$ in Proposition \ref{prop:a4c},  
we calculate the residue of the principal part of the Hamiltonian, $\hat{H},$ at $t=c.$

\begin{proposition}
\label{prop:H}
{\it
Suppose that $A_4^{(1)}(\alpha_j)_{0\leq j \leq 4}$ has a meromorphic solution at $t=c\in\mathbb{C}.$ 
\newline
(1)\quad 
Assume that for some $i=0,1,2,3,4,$ 
$f_i , f_{ i + 1 }$ both have a pole at $t=c$ 
and $f_{i+2},f_{i+3},f_{i+4}$ are all holomorphic at $t=c.$ Then, 
\begin{equation*} 
\Res_{t=c} \hat{H} =\alpha_{ i + 2 } + \alpha_{ i + 4 }.
\end{equation*}
(2)\quad 
Assume that for some $i=0,1,2,3,4,$ $f_i , f_{ i + 2 }$ both have a pole at $t=c$ 
and $f_{i+1},f_{i+3},f_{i+4}$ are all holomorphic at $t=c.$ Then, 
\begin{equation*} 
\Res_{t=c} \hat{H} =
\alpha_{ i + 1 }; 
\end{equation*}
(3) \quad 
Assume that for some $i=0,1,2,3,4,$ $f_{ i + 1 }, f_{ i + 2 }, f_{ i + 3 }, f_{ i + 4 }$ all 
have a pole at $t=c$ and $f_i$ is holomorphic at $t=c.$ Then, 
\begin{equation*} 
\Res_{t=c} \hat{H} =
\alpha_{ i + 1 } + \alpha_{ i + 4 }.
\end{equation*}
}
\end{proposition}

\subsection{Rational solutions and the residue calculus of $\hat{H}$}

\begin{proposition}
\label{prop:fund}
{\it
Suppose that 
$A^{(1)}_4(\alpha_j)_{0\leq j \leq 4}$ 
has a rational solution and the parameters satisfy 
$0\leq \alpha_j \leq 1 \,\,(0\leq j \leq 4).$ 
Then, 
\begin{equation*}
h_{\infty,-1} \geq 0.
\end{equation*}
}
\end{proposition}

\begin{proof}
Let $c_1, \ldots, c_k \in \mathbb{C}$ be  
the poles of $(f_j)_{0\leq j \leq 4}$. 
Since $0 \leq \alpha_j \leq 1 \,\,(0\leq j \leq 4),$ 
it follows 
from Proposition \ref{prop:H} that 
\begin{equation*}
\Res_{t=c_l} \hat{H} \geq 0 \,\,(1 \leq l \leq k).
\end{equation*} 
Therefore, it follows from the residue theorem that 
\begin{equation*}
h_{\infty,-1}
=
-\Res_{t=\infty} \hat{H}
=
\sum_{l=1}^k 
\Res_{t=c_l}\hat{H}
\geq 0.
\end{equation*}
\end{proof}

\section{Necessary Conditions}
In this section, 
we obtain the necessary condition for 
$A^{(1)}_4(\alpha_i)_{0\leq i \leq 4}$ to have a rational solution. 
For this purpose, 
we use Corollary \ref{coro:res}.

\subsection{Necessary conditions for Type A}

\begin{theorem}
\label{thm:a4par-A}
{\it
Suppose that $A^{(1)}_4(\alpha_j)_{0\leq j \leq 4}$ has a rational solution of Type A. 
The parameters then satisfy 
$ 
\alpha_i \in \Z \,\,(0\leq i \leq 4).
$
}
\end{theorem}

\begin{proof}
Proposition \ref{prop:a4c} implies that 
$\Res_{t=c} f_i =\pm1, \pm 3 \, ( 0 \leq i \leq 4 )$ for $c \in \mathbb{C}$. 
Therefore, it follows from the residue theorem 
that $\Res_{t=\infty} f_i \in \Z \,\,(0\leq i \leq 4).$
\par
From Proposition \ref{prop:a4inf}, 
it follows that either Type A (1) or Type A (2) occurs. 
If Type A (1) happens, 
from the residue theorem and  Proposition \ref{prop:a4inf}, 
we find that $\alpha_{i+1}, \alpha_{i+2}, \alpha_{i+3}, \alpha_{i+4} \in \Z,$ 
which proves that $\alpha_i \in \Z$ because $\sum_{k=0}^4 \alpha_k=1.$ 
If Type A (2) occurs,  
we can show that $\alpha_j \in \Z \,\,(0\leq j \leq 4)$ in the same way as Type A (1).

\end{proof}

\subsection{Necessary condition for Type B}

\begin{theorem}
\label{thm:a4par-B}
{\it
Suppose that $A^{(1)}_4(\alpha_j)_{0\leq j \leq 4}$ has a rational solution of Type B. 
The parameters then satisfy 
$$ 
(\alpha_i, \alpha_{i+1}, \alpha_{i+2}, \alpha_{i+3}, \alpha_{i+4})
\equiv
\left( 
\frac{n_1}{3}-\frac{n_3}{3}, \, \frac{n_1}{3}, \,  
\frac{n_1}{3}+\frac{n_4}{3}, \, \frac{n_3}{3}, \,  
- \frac{n_4}{3}
\right) \quad \textrm{mod} \, \,
\mathbb{Z},  \,\,n_1, n_3, n_4=0, 1, 2, 
$$
for some $i=0,1,2,3,4.$ 
}
\end{theorem}

\begin{proof}
Proposition \ref{prop:a4c} implies that 
$\Res_{t=c} f_i =\pm1, \pm 3 \, ( 0 \leq i \leq 4 )$ for $c \in \mathbb{C}$. 
Therefore, it follows from the residue theorem 
that $\Res_{t=\infty} f_i \in \Z \,\,(0\leq i \leq 4).$
\par
It then follows 
from Proposition \ref{prop:a4inf} that 
$\Res_{t=\infty} f_{i+3} \in\mathbb{Z}$ and $\Res_{t=\infty} f_{i+4} \in \Z,$ 
which imply that 
$$
\alpha_{i+3}=n_3/3 \,\,\mathrm{and} \,\,
\alpha_{i+4} = -n_4/3, \, n_3 , n_4 \in \mathbb{Z}.
$$ 
Furthermore, 
Proposition \ref{prop:a4inf} shows that $\Res_{t=\infty}f_{i+1}\in\mathbb{Z}$ and $\Res_{t=\infty}f_{i+2} \in \Z,$ 
which imply that 
\begin{equation*}
\alpha_{i+2}-\alpha_i-\frac{n_3}{3}-\frac{n_4}{3} = m_1\in \mathbb{Z},  \,\,
\alpha_i-\alpha_{i+1}+\frac{n_3}{3}-n_4           = m_2 \in \mathbb{Z},
\end{equation*}
respectively. 
By solving this system of equations of 
$\alpha_{i}, \alpha_{i+2}$, 
we obtain
\begin{alignat*}{4}
&\alpha_i      &  &= & \,\, &\alpha_{i+1}-\frac{n_3}{3}+m_2+n_4  \\
&\alpha_{i+1}  &  &= & \,\, &\alpha_{i+1}  \\
&\alpha_{i+2}  &  &= & \,\, &\alpha_{i+1}+\frac{n_4}{3}+m_1+m_2+n_4.
\end{alignat*}
Since
$
\alpha_{i+3}=n_3/3, 
\alpha_{i+4}=-n_4/3 
$ 
and 
$\sum_{j=0}^{4} \alpha_j = 1,$ 
it follows that 
$\alpha_{i+1}=n_1/3$ for some integer $n_1 \in \mathbb{Z},$ 
which implies that 
$$
(\alpha_i, \alpha_{i+1}, \alpha_{i+2}, \alpha_{i+3}, \alpha_{i+4}) 
\equiv 
\left(
\frac{n_1}{3}-\frac{n_3}{3}, 
\frac{n_1}{3}, 
\frac{n_1}{3}+\frac{n_4}{3}, 
\frac{n_3}{3}, 
-\frac{n_4}{3} 
\right) \,\, \mathrm{mod}\,\,\Z.
$$

\end{proof}

\subsection{Necessary condition for Type C}

\begin{theorem}
\label{thm:a4par-C}
{\it
Suppose that $A^{(1)}_4(\alpha_j)_{0\leq j \leq 4}$ has a rational solution of Type C. 
The parameters then satisfy 
\begin{align*}
(\alpha_i, \alpha_{i+1}, \alpha_{i+2}, \alpha_{i+3}, \alpha_{i+4})
&\equiv
\left( 
\frac{n_1}{5}+\frac{2n_2}{5}+\frac{3n_3}{5}, \, 
\frac{n_1}{5}+\frac{2n_2}{5}+\frac{n_3}{5}, \, \frac{n_1}{5}, \, 
\frac{n_1}{5}+\frac{n_2}{5}, \, \frac{n_1}{5}+\frac{n_3}{5} 
\right) 
\,
\textrm{mod} \, \,\Z,   \\
&\hspace{50mm}n_1, n_2, n_3=0, 1, 2, 3, 4,
\end{align*} 
for some $i=0,1,2,3,3,4.$
}
\end{theorem}

\begin{proof}
Proposition \ref{prop:a4c} implies that 
$\Res_{t=c} f_i =\pm1, \pm 3 \, ( 0 \leq i \leq 4 )$ for $c \in \mathbb{C}$. 
Therefore, it follows from the residue theorem 
that $\Res_{t=\infty} f_i \in \Z \,\,(0\leq i \leq 4).$
\par
Then, 
it follows 
from Proposition \ref{prop:a4inf} that 
\begin{equation*}
(*)
\begin{cases}
3\alpha_1+\alpha_2-\alpha_3-3\alpha_4 &= m_0 \in \mathbb{Z}, \\
3\alpha_2+\alpha_3-\alpha_4-3\alpha_0 &= m_1 \in \mathbb{Z},\\
3\alpha_3+\alpha_4-\alpha_0-3\alpha_1 &= m_2 \in \mathbb{Z}, \\
3\alpha_4+\alpha_0-\alpha_1-3\alpha_2 &= m_3 \in \mathbb{Z}, \\
3\alpha_0+\alpha_1-\alpha_2-3\alpha_3 &= m_4 \in \mathbb{Z}.
\end{cases}
\end{equation*}
Solving this system of equations of $\alpha_j \,(0 \leq j \leq 4),$ we obtain 
\begin{align*}
\alpha_0 &= 
\alpha_3-\frac{3}{5}m_0-\frac{2}{5}m_1-\frac{2}{5}m_2-\frac{1}{5}m_3  \\
\alpha_1 
&= 
\alpha_3+\frac{1}{5}m_1-\frac{2}{5}m_2+\frac{1}{5}m_3  \\
\alpha_2 
&= 
\alpha_3-\frac{4}{5}m_0-\frac{3}{5}m_2-\frac{3}{5} m_3  \\
\alpha_3 &= 
\alpha_3  \\
\alpha_4 &=
\alpha_3-\frac{3}{5}m_0+\frac{1}{5}m_1-\frac{3}{5} m_2.
\end{align*}
Since $ \sum_{ i = 0 }^{ 4 } \alpha_i= 1 $, 
it follows that 
$
\alpha_j = n_j/5 \, n_j \in \mathbb{Z} \, (0 \leq j \leq 4).
$
Substituting $\alpha_j=n_j/5$ in $(*),$ 
we get 
\begin{equation*}
(**)
\begin{cases}
3n_1+n_2-n_3-3n_4 &\equiv 0 \,\, \textrm{mod} \,\, 5  \\
3n_2+n_3-n_4-3n_0 &\equiv 0 \,\, \textrm{mod} \,\, 5  \\
3n_3+n_4-n_0-3n_1 &\equiv 0 \,\, \textrm{mod} \,\, 5  \\
3n_4+n_0-n_1-3n_2 &\equiv 0 \,\, \textrm{mod} \,\, 5  \\
3n_0+n_1-n_2-3n_3 &\equiv 0 \,\, \textrm{mod} \,\, 5.
\end{cases}
\end{equation*}
By solving $(**)$ in the field $\mathbb{Z} / 5 \mathbb{Z}$, 
we obtain
\begin{alignat*}{7}
&n_0  & \,\, &\equiv  & \,\,  &l_1+2l_2+3l_3  & \,\, &\textrm{mod} \,\, 5,   \\
&n_1  & \,\, &\equiv  & \,\,  &l_1+2l_2+l_3   & \,\, &\textrm{mod} \,\, 5,  \\
&n_2  & \,\, &\equiv  & \,\,  &l_1            & \,\, &\textrm{mod} \,\, 5,   \\
&n_3  & \,\, &\equiv  & \,\,  &l_1+l_2        & \,\, &\textrm{mod} \,\, 5,   \\
&n_4  & \,\, &\equiv  & \,\,  &l_1+l_3        & \,\, &\textrm{mod} \,\, 5,
\end{alignat*}
where $l_1, l_2, l_3=0,1,2,3,4.$ 
\end{proof}

\section{Reduction of the Parameters}
In the previous section, 
we obtained the necessary conditions for $A_4^{(1)}(\alpha_j)_{0\leq j \leq 4}$ 
to have a rational solution, 
which are expressed by the parameters, $\alpha_j \,\,(0\leq j \leq 4).$ 
\par
In this section, 
we prove that 
by some B\"acklund transformations, 
the parameters can be transformed so that $0\leq \alpha_i \leq 1\,\,(0\leq i \leq 4).$

\subsection{Shift operators}

Noumi and Yamada \cite{NoumiYamada-A} defined the shift operators in the following way:
\begin{proposition}
\label{prop:shift}
{\it
For any $i=0,1,2,3,4,$
$T_i$ 
denote the shift operators 
which 
are expressed by 
$$ 
T_1 = \pi s_4 s_3 s_2 s_1, \, 
T_2 = s_1 \pi s_4 s_3 s_2, \, 
T_3 = s_2 s_1 \pi s_4 s_3, \, 
T_4 = s_3 s_2 s_1 \pi s_4, \, 
T_0 = s_4 s_3 s_2 s_1 \pi. 
$$
Then, 
$$
T_i (\alpha_{i-1}) 
= 
\alpha_{i-1}+1, \, 
T_i(\alpha_i) 
=\alpha_i-1, \, 
T_i(\alpha_j)=\alpha_j \, (j \neq i - 1, i).
$$
}
\end{proposition}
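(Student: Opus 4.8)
The plan is a direct verification: each $T_i$ is a word in the generators $s_0,\dots,s_4,\pi$, whose action on the parameters is tabulated, so one simply composes these actions and checks that the result is the claimed shift. Abstractly this is the statement that the $T_i$ span the translation subgroup of $\tilde{W}(A_4^{(1)})$, the proposition recording the corresponding translations of the weight lattice in the present coordinates. Two observations cut the work down. First, the B\"acklund action on $(\alpha_0,\dots,\alpha_4)$ is linear and involves none of the $f_j$: the $\alpha$-rows of the table realize $s_j$ as the linear map of $\C^5$ with $\alpha_j\mapsto-\alpha_j$, $\alpha_{j\pm1}\mapsto\alpha_{j\pm1}+\alpha_j$, $\alpha_{j\pm2}\mapsto\alpha_{j\pm2}$, and $\pi$ as the cyclic permutation $\alpha_k\mapsto\alpha_{k+1}$; hence $T_i$ acts linearly, and it suffices to evaluate this map on the coordinate vector and then to impose the relation $\alpha_0+\alpha_1+\alpha_2+\alpha_3+\alpha_4=1$ valid on every solution. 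Second, the five defining words are cyclically related: relabelling indices by $k\mapsto k+1$ sends $T_1=\pi s_4 s_3 s_2 s_1$ to $\pi s_0 s_4 s_3 s_2$, which equals $T_2=s_1\pi s_4 s_3 s_2$ because $\pi s_0=s_1\pi$ --- the $j=0$ instance of $\pi s_j\pi^{-1}=s_{j+1}$, immediate from the table --- and similarly all the way around; since $\pi$ also sends $\alpha_k$ to $\alpha_{k+1}$, it is enough to prove the formula for $T_1$.

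For $T_1$ I would apply $s_1,s_2,s_3,s_4,\pi$ to $(\alpha_0,\dots,\alpha_4)$ one at a time, in the order prescribed by the word, using only the $\alpha$-rows of the table. The intermediate vectors are bulky --- their entries are the various $\alpha_j$ corrected by partial sums of the other parameters --- but they collapse at the last step: a four-term sum $\alpha_p+\alpha_q+\alpha_r+\alpha_s$ turns up which, by $\alpha_0+\dots+\alpha_4=1$, equals $1$ minus the remaining parameter, and exactly this substitution converts two of the entries into $\alpha_0+1$ and $\alpha_1-1$ while leaving the other three untouched. This yields $T_1(\alpha_0)=\alpha_0+1$, $T_1(\alpha_1)=\alpha_1-1$, and $T_1(\alpha_j)=\alpha_j$ for $j=2,3,4$, and the remaining $T_i$ follow from the cyclic symmetry above.

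There is no genuine obstacle: once the linearity of the parameter action is noticed, the argument is pure bookkeeping. The two things needing care are the composition convention --- which factor of the word is applied first, since that fixes the order of the computation --- and the point that the $\pm1$ shifts are invisible at every intermediate stage, appearing only when $\alpha_0+\dots+\alpha_4=1$ is invoked at the very end; before that one merely shuffles linear combinations of the $\alpha_j$. Keeping those straight is essentially the whole proof.
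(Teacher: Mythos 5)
Your verification is correct: composing the tabulated linear actions of $s_1,s_2,s_3,s_4,\pi$ does give $T_1(\alpha_0)=\alpha_0+1$, $T_1(\alpha_1)=\alpha_1-1$, $T_1(\alpha_j)=\alpha_j$ otherwise, with the normalization $\alpha_0+\cdots+\alpha_4=1$ entering exactly where you say, and the reduction to $T_1$ via $\pi s_j\pi^{-1}=s_{j+1}$ is sound. The paper itself supplies no proof here --- the proposition is quoted from Noumi and Yamada --- so there is nothing to compare against; your direct computation is the standard argument and fills the omission. The one point to pin down explicitly in a final write-up is the composition convention (the rightmost factor of the word acts first as a substitution on the symbols $\alpha_j$), since with the opposite reading the intermediate expressions differ even though the end result can be recovered.
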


We can easily check that 
Type A, Type B and Type C are invariant under 
all the B\"acklund transformations. 

\begin{proposition}
\label{prop:relation-s_i-sol}
{\it
(1)\quad Suppose that 
$(f_k)_{0\leq k \leq 4}$ is a meromorphic solution of 
Type A  at $t=\infty.$ 
For any $j=0,1,2,3,4,$ 
$s_j(f_k)_{0\leq k \leq 4}$ and $\pi(f_k)_{0\leq k\leq 4}$ are then all 
meromorphic solutions of Type A at $t=\infty. $ 
\newline 
(2)\quad Suppose that 
$(f_k)_{0\leq k \leq 4}$ is a meromorphic solution of Type B at $t=\infty.$  
For any $j=0,1,2,3,4,$ 
$s_j (f_k)_{0\leq k \leq 4}$ and $\pi(f_k)_{0\leq k\leq 4}$ are then all meromorphic solutions of Type B at $t=\infty.$  
\newline 
(3)\quad Suppose that 
$(f_k)_{0\leq k \leq 4}$ is a meromorphic solution of Type C at $t=\infty.$  
For any $j=0,1,2,3,4,$ 
$s_j (f_k)_{0\leq k \leq 4}$ and $\pi(f_k)_{0\leq k\leq 4}$ are then all meromorphic solutions  of Type C at $t=\infty$. 
}
\end{proposition}

\begin{proof}
It can be easily checked.

\end{proof}

Using Propositions \ref{prop:shift} and \ref{prop:relation-s_i-sol}, 
we prove that 
for a rational solution of Types A, B, C of $A^{(1)}_4(\alpha_j)_{0\leq j \leq 4},$ 
by some B\"acklund transformations, 
the parameters can be transformed so that 
$0\leq \alpha_j \leq 1 \,\,(0\leq j \leq 4).$

\subsection{The reduction of the parameters for Type A}

\begin{proposition}
\label{prop:reduction-typeA}
{\it
Suppose that $A_4^{(1)}(\alpha_j)_{\leq j \leq 4}$ has a rational solution Type A. 
By some B\"acklund transformations, 
the parameters can then be transformed so that 
$(\alpha_0,\alpha_1,\alpha_2,\alpha_3,\alpha_4)=(1, 0, 0, 0, 0).$
}
\end{proposition}

\begin{proof}
Proposition \ref{thm:a4par-A} shows that $\alpha_j =n_j \in\mathbb{Z}, \,(0\leq j \leq 4).$ 
We can then inductively prove that 
the parameters, $(\alpha_0,\alpha_1,\alpha_2,\alpha_3,\alpha_4)=(n_0,n_1,n_2,n_3,n_4), $  
can be transformed to $(1, 0, 0, 0, 0).$ 
For this purpose, 
we use induction with respect to the number of nonzero parameters. 
\par
If the number of nonzero parameters is one,  
by $\pi$, the parameters can be transformed to $(1, 0, 0, 0, 0).$
\par
If the number of nonzero parameters is two, 
either of the following occurs:
\newline
(1)\quad for some $i=0,1,2,3,4,$ $(\alpha_{i}, \alpha_{i+1}, \alpha_{i+2}, \alpha_{i+3}, \alpha_{i+4})=(n_i, n_{i+1}, 0, 0, 0), $ 
\newline
(2)\quad for some $i=0,1,2,3,4,$ $(\alpha_{i}, \alpha_{i+1}, \alpha_{i+2}, \alpha_{i+3}, \alpha_{i+4})=(n_i, 0, n_{i+2}, 0, 0).$  
\par
When case (1) occurs, by $\pi$ and $T_1,$ 
the parameters can be transformed to $(1, 0, 0, 0, 0).$ 
When case (2) happens, by $\pi$ and $T_2,$ 
the parameters can be transformed to $ (1-n, n, 0, 0, 0), \,n\in\mathbb{Z}.$ 
By $T_1,$ the parameters can then be transformed to $(1, 0, 0, 0, 0).$
\par
If the number of nonzero parameters is three, 
either of the following occurs:
\newline
(3)\quad for some $i=0,1,2,3,4,$ $(\alpha_{i}, \alpha_{i+1}, \alpha_{i+2}, \alpha_{i+3}, \alpha_{i+4})=(n_i, n_{i+1}, n_{i+2}, 0, 0), $ 
\newline
(4)\quad for some $i=0,1,2,3,4,$ $(\alpha_{i}, \alpha_{i+1}, \alpha_{i+2}, \alpha_{i+3}, \alpha_{i+4})=(n_i, n_{i+1}, 0, n_{i+3}, 0).$  
\par
When case (3) occurs, by $\pi$ and $T_2,$ 
the parameters can be transformed to $(1-n,n,0,0,0) \,\, n\in\mathbb{Z}.$ Then, from the above discussions, 
the parameters can be transformed to $(1,0,0,0,0).$ 
\par
When case (4) occurs, by $T_3,$ the parameters can be transformed to $(n_0, n_1,n_2, 0, 0, 0).$ 
From the above discussions, 
the parameters can then be transformed to $(1,0,0,0,0).$ 
\par
If the number of nonzero parameters is four, then, 
by $\pi,$ the parameters can be transformed to 
$(\alpha_0,\alpha_1,\alpha_2,\alpha_3,\alpha_4)=(n_0,n_1,n_2,n_3, 0).$ 
Thus, 
by $T_3,$ 
the parameters can be transformed to $(n_0,n_1,n_2+n_3,0, 0).$ Then, from the above discussions, 
the parameters can be transformed to $(1,0,0,0,0).$ 
\par
If the number of nonzero parameters is five, then, 
by $T_4,$ the parameters can be transformed to $(n_0,n_1,n_2,n_3+n_4, 0).$ Therefore, from the above discussions, 
the parameters can be transformed to $(1,0,0,0,0).$

\end{proof}

\subsection{Reduction of the parameters for Type B}

\begin{proposition}
\label{prop:reduction-typeB}
{\it
Suppose that $A_4^{(1)}(\alpha_j)_{\leq j \leq 4}$ has a rational solution of Type B. 
By some B\"acklund transformations, 
the parameters can then be transformed so that 
one of the following occurs:
\begin{align*} 
(\alpha_0,\alpha_1,\alpha_2,\alpha_3,\alpha_4) =&
(1/3, 1/3, 1/3, 0, 0), 
(2/3, 0, 0, 1/3, 0), 
(1/3, 0, 0, 2/3, 0), \\
&(0, 1/3, 0, 1/3, 1/3), 
(1,0,0,0,0).
\end{align*}
\par
Especially, 
the parameters are transformed into 
$(1/3, 1/3, 1/3, 0, 0)$  
if in Theorem \ref{thm:a4par-B}, 
\begin{equation*}
(n_1, n_3, n_4)
= 
(\pm1,0,0), (\pm1,0,\pm1), (\pm1, \pm1, 0), \pm(0,1,-1),
\end{equation*}
that is, if 
for some $ i = 0, 1, \ldots 4$,
$$
\quad  
(\alpha_i, \alpha_{i+1}, \alpha_{i+2}, \alpha_{i+3}, \alpha_{i+4}) \equiv
\left\{
\begin{matrix}
\pm 1/3(1, 1, 1, 0, 0) \quad {\rm mod}   \, \Z \\
\pm 1/3(1, -1, -1, 1, 0)\quad {\rm mod}   \, \Z. 
\end{matrix}
\right.
$$
Otherwise, the parameters are transformed to the others. 
}
\end{proposition}

\begin{proof}
By Theorem \ref{thm:a4par-B} and $\pi,$
we can transform the parameters  
$$
(\alpha_0,\alpha_1,\alpha_2,\alpha_3,\alpha_4)
=
\left(
\frac{n_1}{3}-\frac{n_3}{3}, 
\frac{n_1}{3}, 
\frac{n_1}{3}+\frac{n_4}{3}, 
\frac{n_3}{3}, 
-\frac{n_4}{3}
\right)\,\, \textrm{mod} \, \Z, \, 
n_1,n_3,n_4=0,1,2.
$$ 
We have to consider $3^3=27$ cases. 
We now treat the following five cases. 
The other twenty two cases can be proved in the same way.
\par
When 
$n_1=n_3=n_4=0$, 
it follows from Proposition \ref{prop:reduction-typeA} that 
$$
(\alpha_0,\alpha_1,\alpha_2,\alpha_3,\alpha_4) 
\longrightarrow 
(1, 0, 0, 0, 0).
$$
\par
When $n_1=1, n_3=0, n_4=2,$ by $\pi$, 
we get
$$
(1/3, 1/3, 0, 0, 1/3) 
\longrightarrow 
(1/3, 1/3, 1/3, 0, 0).              
$$
\par
When $n_1 =1=n_3=n_4=1$, by $s_0 \circ s_4$, we obtain 
$$
(0,1/3, 2/3,1/3,-1/3)
\longrightarrow
(1/3,0,2/3,0,0).
$$
\par
When $n_1=1, n_3=n_4=2$, by $s_0$, 
we have 
$$
(-1/3, 1/3, 0, 2/3, 1/3)
\longrightarrow
(1/3, 0, 0, 2/3, 0).
$$
\par
When $n_1=1=n_3=1, n_4=2$, by $\pi$, we get 
$$
(0, 1/3, 0, 1/3, 1/3)
\longrightarrow
(1/3, 0, 1/3, 1/3,0).
$$

\end{proof}

\subsection{Reduction of the parameters for Type C}

\begin{proposition}
\label{prop:reduction-typeC}
{\it
Suppose that 
$A_4^{(1)}(\alpha_j)_{\leq j \leq 4}$ has a rational solution Type C. 
Then, by some B\"acklund transformations, 
the parameters can be transformed so that 
one of the following occurs:
\begin{align*}
(\alpha_0,\alpha_1,\alpha_2,\alpha_3,\alpha_4)=&(1/5, 1/5, 1/5, 1/5, 1/5), 
(1, 0, 0, 0, 0), (3/5, 0, 1/5, 1/5, 0), \\
&(0, 2/5, 2/5, 0), 
(1/5, 2/5, 0, 0, 2/5),
(3/5, 1/5, 0, 0, 1/5). 
\end{align*}
Especially, 
the parameters in Theorem \ref{thm:a4par-C} are 
transformed into 
$(1/5, 1/5, 1/5, 1/5, 1/5)$ 
if 
\begin{align*}
(n_1, n_2, n_3) 
=&
(1, 0, 0), (1, 2, 2), (1, 0, 1), 
(1, 2, 3), (1, 1, 0), (2, 0, 0), \\
&
(2, 4, 4), (2, 0, 2), (2, 4, 1), 
(2, 2, 0), (2, 2, 1), (3, 0, 0), \\
& 
(3, 1, 1), (3, 3, 4), (3, 3, 0), 
(3, 0, 3), (3, 1, 4), (4, 0, 0),  \\
& 
(4, 3, 3), (4, 4, 0), (4, 4, 2), 
(4, 3, 2), (4, 0, 4),
\end{align*}
that is, if 
for some $i = 0, 1, \ldots, 4$,
$$
(\alpha_i, \alpha_{i+1}, \alpha_{i+2}, \alpha_{i+3}, \alpha_{i+4}) \equiv
\left\{
\begin{matrix}
  j/5(1, 1, 1, 1, 1) \quad {\rm mod} \, \Z    \\
  j/5(1, 2, 1, 3, 3) \quad {\rm mod} \, \Z,   
\end{matrix}
\right.
$$
with some $ j=1, 2, 3, 4.$ 
}
\end{proposition}

\begin{proof}
By Theorem \ref{thm:a4par-C} and $\pi,$
we can transform the parameters to 
\begin{align*}
(\alpha_0,\alpha_1,\alpha_2,\alpha_3,\alpha_4)
&= 
\left( 
\frac{n_1}{5}+\frac{2n_2}{5}+\frac{3n_3}{5}, 
\frac{n_1}{5}+\frac{2n_2}{5}+\frac{n_3}{5}, 
\frac{n_1}{5}, 
\frac{n_1}{5}+\frac{n_2}{5}, 
\frac{n_1}{5}+\frac{n_3}{5} 
\right)\, \textrm{mod}\,\Z \,  \\
&\hspace{30mm}n_1,n_2,n_3=0,1,2,3,4. 
\end{align*} 
We have to consider $5^3=125$ cases. 
We now treat the following six cases. 
The other 119 (=125$-$6) cases can be proved in the same way.
\par
When $n_1=n_2=n_3=0,$ by some shift operators, we get
$$
(\alpha_i)_{0\leq i \leq 4}
\longrightarrow
(1,0,0,0,0).
$$
\par
When $n_1=1, n_2=n_3=0,$ by some shift operators, we obtain 
$$
(\alpha_i)_{0\leq i \leq 4}
\longrightarrow
(
1/5,
1/5,
1/5,
1/5,
1/5
).
$$
\par
When $n_1=0, n_2=2, n_3=0,$ by  $\pi^{-1}\circ s_4 \circ s_0$, 
we have
$$ 
(-1/5, 4/5, 0, 2/5, 0) 
\longrightarrow
(3/5, 0, 1/5, 1/5, 0).
$$
\par
When $n_1=0, n_2=1, n_3=0,$ by some shift operators, we get 
$$
(\alpha_i)_{0\leq i \leq 4}
\longrightarrow
(2/5, 2/5, 0, 1/5, 0).
$$
\par
When $n_1=n_2=0, n_3=2,$ by some shift operators, we obtain 
$$
(\alpha_i)_{0\leq i \leq 4}
\longrightarrow
(1/5, 2/5, 0, 0, 2/5).
$$ 
\par
When $n_1=n_2=0, n_3=1,$ by some shift operators, we have
$$
(\alpha_i)_{0\leq i \leq 4}
\longrightarrow
( 3/5 , 1/5 , 0 , 0 , 1/5 ).
$$
\end{proof}

\section{Rational Solutions for Some Parameters}
In this section, 
we determine the rational solutions of $A_4^{(1)}(\alpha_j)_{0\leq j \leq 4}$ 
such that the parameters satisfy $0\leq \alpha_j\leq 1\,\,(0\leq j \leq 4).$

\subsection{Rational Solutions of $A^{(1)}_4(1,0,0,0,0)$}

\begin{table}[h]
\caption
{
the residues of $ \hat{H} $ at $t=c \in \C$ 
for $ A_4^{(1)}( 1,0,0, 0,0 ) $
}
\begin{center}
\begin{tabular}{|c|c|c|c|c|c|}
\hline
$i$ & 0 & 1 & 2 & 3 & 4  \\
\hline
$ \alpha_{ i + 2 } + \alpha_{ i + 4 } $ & 0 & $ 1$ & 
$ 0$ & $ 1$ & $ 0$ \\
\hline
$ \alpha_{ i + 1 } $ & $ 0$ & $ 0$ & $ 0$ & 0 & $1$ \\
\hline
$ \alpha_{ i + 1 } + \alpha_{ i + 4 } $ & $ 0$ & $1$ & 
$ 0$ & $ 0$ & $ 1$ \\
\hline
\end{tabular}
\end{center}
\end{table}

Using Table 1, 
we determine the rational solution of Type A of 
$A^{(1)}_4(1,0,0,0,0).$

\begin{proposition}
\label{prop:Apoint}
{\it
$A^{(1)}_4(1,0,0,0,0)$ 
has a rational solution 
of Type A and 
\begin{equation*}
(f_0,f_1,f_2,f_3,f_4)
=
(t,0,0,0,0),
\end{equation*}
and it is unique. 
}
\end{proposition}

\begin{proof}
By direct calculation, 
we find that 
$A^{(1)}_4(1,0,0,0,0)$ 
has a rational solution such that 
$(f_0,f_1,f_2,f_3,f_4)=(t,0,0,0,0). $ 
Furthermore, 
from Proposition \ref{prop:a4inf}, 
it follows that if $f_0$ has a pole at $t=\infty$ and $f_1,f_2,f_3,f_4$ are all holomorphic at $t=\infty,$ 
the rational solution is unique. 
We then have only to prove that $A^{(1)}_4(1,0,0,0,0)$ has no more rational solutions of Type A. 
For this purpose, 
we consider the other cases:
\newline
Type A (1)\quad for some $i=1,2,3,4,$ $f_i$ has a pole at $t=\infty$ and $f_{i+1},f_{i+2},f_{i+3},f_{i+4}$ are all holomorphic at $t=\infty,$
\newline
Type A (2)\quad for some $i=0,1,2,3,4,$ $f_i, f_{i+1},f_{i+3}$ all have a pole at $t=\infty$ and $f_{i+2},f_{i+4}$ are both holomorphic at $t=\infty.$ 
\par
From Proposition \ref{prop:phinf}, 
we see that 
for a rational solution of Type A of 
$A^{(1)}_4(1,0,0,0,0),$ 
$h_{\infty,-1}=0.$ 
Furthermore, 
from Proposition \ref{prop:H} and Table 1, 
we find that 
the residue of $\hat{H}$ at $t=c \in \C$ 
is nonnegative. 
It then follows from the residue theorem that for a rational solution of $A^{(1)}_4(1,0,0,0,0),$ 
$\Res_{t=c} \hat{H} =0.$
Therefore, 
Table 1 implies that 
\begin{align*}
&
(f_0,f_1), \,\, (f_2,f_3), \,\,  (f_4,f_0) \\
&
(f_0,f_2), \,\,(f_1,f_3),\,\,(f_2,f_4),\,\,(f_3,f_0) \\
&
(f_1,f_2,f_3,f_4),\,\,(f_3,f_4,f_0,f_1),\,\,(f_4,f_0,f_1,f_2)
\end{align*}
can have a pole at $t=c \in \C.$
\par
We treat the case where 
$f_1$ has a pole at $t=\infty$ and show a contradiction. 
From the other three cases of Type A (1), 
we can prove the contradiction in the same way. 
Proposition \ref{prop:a4inf} implies that 
\begin{equation*}
-\Res_{t=\infty} f_1=1, 
f_2=f_3=f_4 \equiv 0, 
-\Res_{t=\infty} f_0=-1.
\end{equation*}
By considering $ f_2=f_3=f_4 \equiv 0,$ 
we then find from Corollary \ref{coro:res} 
that 
$f_0,f_1$ both have a pole at $t=0.$ 
On the other hand, 
it follows from Proposition \ref{prop:a4c} that 
$\Res_{t=0} f_0=1, \, \Res_{t=0} f_1=-1,$ 
which contradicts the residue theorem.
\par
We treat Type A (2). 
When $f_0,f_1,f_3$ all have a pole at $t=\infty,$ 
Proposition \ref{prop:a4inf} shows that 
\begin{equation*}
-\Res_{t=\infty} f_0=0, \,\,
-\Res_{t=\infty} f_1=1, \,\,
f_2 \equiv 0, \,\,
-\Res_{t=\infty} f_3=-1, \,\,
f_4 \equiv 0.
\end{equation*}
Since $f_2 =f_4 \equiv 0,$ 
\begin{equation*}
(f_0,f_1), \,\,
(f_1,f_3),\,\,(f_3,f_0) 
\end{equation*}
can have a pole in $\mathbb{C}.$ 
If $(f_0,f_1)$ or $(f_3,f_0)$ has a pole at $t=c \in \C,$ 
it follows 
from Proposition \ref{prop:a4c} that $\Res_{t=c} f_0=1,$ 
which contradicts the residue theorem. 
If $(f_1,f_3)$ has a pole at $t=c \in \mathbb{C},$ 
it follows from 
Proposition \ref{prop:a4c} that $\Res_{t=c} f_1=-1,\,\,\Res_{t=c} f_3=1,$ 
which contradicts the residue theorem. 
\par
When $f_1,f_2,f_4$ all have a pole at $t=\infty,$ 
it follows 
from Proposition \ref{prop:a4inf} that 
\begin{equation*}
-\Res_{t=\infty} f_1=1, \,\,
-\Res_{t=\infty} f_2=3, \,\,
f_3 \equiv 0, \,\,
-\Res_{t=\infty} f_4=-3, \,\,
-\Res_{t=\infty} f_0 =-1.
\end{equation*}
Therefore, 
\begin{equation*}
(f_0,f_1), \,\,(f_4,f_0) \,\,
(f_0,f_2), \,\,(f_2,f_4),\,\,
(f_4,f_0,f_1,f_2)
\end{equation*}
can have a pole in $\C$ 
because $f_3 \equiv 0.$ 
When 
$(f_4,f_0),$ $(f_2,f_4),$ 
$(f_4,f_0,f_1,f_2)$ 
has a pole at $t=c \in \mathbb{C},$ 
it follows 
from Proposition \ref{prop:a4c} that 
$
\Res_{t=c} f_4=1, \,\, 3,
$ 
which contradicts the residue theorem. 
Therefore, 
$f_4$ is holomorphic in $\mathbb{C},$ which is impossible.  
\par
When $f_2,f_3,f_0$ all have a pole at $t=\infty,$ 
it follows 
from Proposition \ref{prop:a4inf} that 
\begin{equation*}
-\Res_{t=\infty} f_2=1, \,\,
-\Res_{t=\infty} f_3=1, \,\,
f_4 \equiv 0, \,\,
-\Res_{t=\infty} f_0=-1, \,\,
-\Res_{t=\infty} f_1 =-1.
\end{equation*}
Corollary \ref{coro:res} then shows that 
$(f_0,f_1,f_2,f_3)$ has a pole at $t=0$ 
because $\Res_{t=\infty} f_j \,\,(0\leq j \leq 4)$ are odd integers. 
Proposition \ref{prop:H} implies that $\Res_{t=0} \hat{H}=1.$ 
Since $-\Res_{t=\infty} \hat{H}=h_{\infty,-1}=0$ and $\Res_{t=c} \hat{H}$ is nonnegative,  
this contradicts the residue theorem. 
\par
When $f_3,f_4,f_1$ all have a pole at $t=\infty,$ 
it follows 
from Proposition \ref{prop:a4inf} that 
\begin{equation*}
-\Res_{t=\infty} f_3=1, \,\,
-\Res_{t=\infty} f_4=-1, \,\,
-\Res_{t=\infty} f_0 =1, \,\,
-\Res_{t=\infty} f_1=-1, \,\,
f_2 \equiv 0.
\end{equation*}
Furthermore, from Table 1, 
it follows that  
\begin{equation*}
(f_0,f_1), \,\,(f_4,f_0) \,\,
(f_1,f_3),\,\,(f_3,f_0), \,\,
(f_3,f_4,f_0,f_1)
\end{equation*}
can have a pole in $\C.$ 
Since $f_4 \not\equiv 0$ and $\Res_{t=\infty} f_4 \neq 0,$ 
$f_4$ has a pole at $t=c \in \C$. 
If $(f_4,f_0)$ or $(f_3,f_4,f_0,f_1)$ has a pole at $t=c \in \C,$ 
it follows 
from Proposition \ref{prop:a4c} that 
$\Res_{t=c} f_4=1, \text{or} \,\,3,$ which contradicts the residue theorem. 
\par
When $f_4,f_0,f_2$ all have a pole at $t=\infty,$ 
it follows 
from Proposition \ref{prop:a4inf} that 
\begin{equation*}
-\Res_{t=\infty} f_4=1, \,\,
-\Res_{t=\infty} f_0=0, \,\,
f_1 \equiv 0, \,\,
-\Res_{t=\infty} f_2=-1, \,\,
f_3 \equiv 0.
\end{equation*}
Since $f_1=f_3 \equiv 0,$ 
it follows from Table 1 that 
\begin{equation*}
(f_4,f_0) \,\,
(f_0,f_2), \,\,(f_2,f_4)
\end{equation*}
can have a pole in $\C.$ 
When $(f_4,f_0)$ or $(f_0,f_2)$ has a pole at $t=c \in \C,$ 
Proposition \ref{prop:a4c} shows that 
$\Res_{t=c} f_0=-1,$ which contradicts the residue theorem. 
Therefore, $f_0$ is holomorphic in $\mathbb{C}$ and $(f_2,f_4)$ has a pole at $t=0$ 
because $\Res_{t=\infty} f_2$ and $\Res_{t=\infty} f_4$ are both odd integers. 
Proposition \ref{prop:a4c} and Corollary \ref{coro:res} imply that 
\begin{equation*}
f_4=t+\frac{1}{t}, \,\,
f_0=t, \,\,
f_1 \equiv 0, \,\,
f_2=-t-\frac{1}{t}, \,\,
f_3 \equiv 0,
\end{equation*}
By substituting this solution into $A^{(1)}_4(1,0,0,0,0)$, 
we can then show the contradiction.
\end{proof}

\subsection{Rational solutions of Type B for some parameters}

\begin{table}[h]
\caption
{
the residues of $ \hat{H} $ at $ t=c \in \C$ 
for $A_4^{(1)}( 1/3  ,  1/3 ,  1/3 , 0,0 ) $
}
\begin{center}
\begin{tabular}{|c|c|c|c|c|c|}
\hline
$i$ & 0 & 1 & 2 & 3 & 4  \\
\hline
$ \alpha_{ i + 2 } + \alpha_{ i + 4 } $ & $ 1/3  $ & $ 1/3  $ & 
$  1/3  $ & $ 2/3  $ & $ 1/3  $ \\
\hline
$ \alpha_{ i + 1 } $ & $ 1/3  $ & $ 1/3 $ & $ 0$ & 0 & $1/3$ \\
\hline
$ \alpha_{ i + 1 } + \alpha_{ i + 4 } $ & $ 1/3  $ & $ 2/3  $ & 
$ 1/3  $ & $ 1/3  $ & $ 1/3  $ \\
\hline
\end{tabular}
\end{center}
\end{table}

Using Table 2, 
we determine the rational solution of Type B of 
$A^{(1)}_4(1/3,1/3,1/3,0,0).$

\begin{proposition}
\label{prop:Bpoint1}
{\it
$A^{(1)}_4(1/3,1/3,1/3,0,0)$ 
has a rational solution of Type B 
such that
$$
(f_0,f_1,f_2,f_3,f_4)
=
(
t/3,
t/3,
t/3,
0,
0
),
$$
and it is unique. 
}
\end{proposition}

\begin{proof}
By direct calculation, 
we find that $A^{(1)}_4(1/3,1/3,1/3,0,0)$ 
has a rational solution of Type B 
such that 
$
(f_0,f_1,f_2,f_3,f_4)
=(t/3,t/3,t/3,0,0). 
$ 
Furthermore, 
it follows from Proposition \ref{prop:a4inf} 
that 
if $f_0,f_1,f_2$ all have a pole at $t=\infty$ and 
$f_3,f_4$ are both holomorphic at $t=\infty,$ 
the solution is unique. 
\par
We also consider the other cases: 
for some $i=1,2,3,4,$ 
$
f_i,f_{i+1},f_{i+2}
$
all have a pole at $t=\infty$ and 
$f_{i+3}, f_{i+4}$ are both holomorphic at $t=\infty.$  
\par
If $f_1,f_2,f_3$ all have a pole at $t=\infty,$ 
it follows 
from Proposition \ref{prop:a4inf} that 
$$
-\Res_{t=\infty} f_1=-\Res_{t=\infty} f_2 =0,
-\Res_{t=\infty} f_3=1, f_4 \equiv 0, -\Res_{t=\infty} f_0=-1.
$$
Proposition \ref{prop:phinf} shows that 
$h_{\infty,-1}=-\mathrm{Res}_{t=\infty} \hat{H}=0.$ 
Furthermore 
Proposition \ref{prop:H} and Table 2 imply that 
$\Res_{t=c} \hat{H}$ is nonnegative 
when 
$
(\alpha_0,\alpha_1,\alpha_2,\alpha_3,\alpha_4) 
=
(1/3, 1/3, 1/3,0,0).
$
Thus, the residue theorem shows that $\Res_{t=c} \hat{H}=0$. 
Therefore, Proposition \ref{prop:H} and Table 2 imply that 
only $(f_2,f_4)$ and $(f_3,f_0)$ can have a pole at $t=c \in \mathbb{C}$. 
Since $f_4 \equiv 0,$ $f_4$ cannot have a pole in $\mathbb{C}$. 
If $(f_3,f_0)$ has a pole at $t=c \in \mathbb{C},$ 
it follows 
from Proposition \ref{prop:a4c} that 
$\Res_{t=c} f_3=-1$ and $\Res_{t=c} f_0=1,$ 
which contradicts the residue theorem.
\par
If $f_2,f_3,f_4$ all have a pole at $t=\infty$, 
it follows 
from Proposition \ref{prop:phinf} that $h_{\infty,-1}=-4/9,$ 
which contradicts Proposition \ref{prop:fund}.
\par
If $f_3,f_4,f_0$ have a pole at $t=\infty$, 
it follows 
from Proposition \ref{prop:phinf} that $h_{\infty,-1}=-10/27,$ 
which contradicts Proposition \ref{prop:fund}.
\par
If $f_4,f_0,f_1$ have a pole at $t=\infty,$ 
it follows 
from Proposition \ref{prop:a4inf} that 
$$
-\Res_{t=\infty} f_4=-1
-\Res_{t=\infty} f_0=0,
-\Res_{t=\infty} f_1=0, 
-\Res_{t=\infty} f_2=1, 
f_3 \equiv 0.
$$
Proposition \ref{prop:phinf} implies that 
$h_{\infty,-1}=-\mathrm{Res}_{t=_infty} \hat{H}=0.$ 
Table 1 shows that 
$\mathrm{Res}_{t=c} \hat{H}$ is nonnegative 
when 
$
(\alpha_0,\alpha_1,\alpha_2,\alpha_3,\alpha_4) 
=
(1/3, 1/3, 1/3,0,0).
$ 
Thus, it follows from the residue theorem that $\Res_{t=c} \hat{H}=0$ for any $c \in \mathbb{C}.$ 
Therefore, Proposition \ref{prop:H} and Table 2 imply that 
only $(f_2,f_4)$ and $(f_3,f_0)$ can have a pole in $\mathbb{C}$. 
Since $f_3 \equiv 0,$ $f_3$ cannot have a pole in $\mathbb{C}$. 
If $(f_2,f_4)$ has a pole at $t=c \in \mathbb{C}$, 
it follows 
from Proposition \ref{prop:a4c} that 
$\mathrm{Res}_{t=c} f_2=-1$ and $\Res_{t=c} f_4=1,$ 
which contradicts the residue theorem.
\end{proof}

Using Proposition \ref{prop:fund}, 
we prove the following proposition:

\begin{proposition}
\label{prop:Bpoint2}
{\it
$
A^{(1)}_4(2/3, 0, 0, 1/3, 0), 
$ 
$A^{(1)}_4(1/3, 0, 0, 2/3, 0),$ 
$A^{(1)}_4(0, 1/3, 0, 1/3, 1/3),$ 
$A^{(1)}_4(1,0,0,0,0)$ 
have no rational solution of Type B.
}
\end{proposition}

\begin{proof}
If the equations in the proposition have a rational solution of Type B, 
it follows 
from Proposition \ref{prop:phinf} that 
$h_{\infty,-1} <0$, 
which contradicts Proposition \ref{prop:fund}.
\end{proof}

\subsection{Rational solutions of Type C for some parameters}

\begin{proposition}
\label{prop:Cpoint1}
{\it
$A^{(1)}_4(1/5,1/5,1/5,1/5,1/5)$ 
has a rational solution of Type C such that 
$$
(f_0,f_1,f_2,f_3,f_4)
=
(
t/5,
t/5,
t/5,
t/5,
t/5
),
$$
and it is unique. 
}
\end{proposition}

\begin{proof}
The proposition follows from Proposition \ref{prop:a4inf}. 

\end{proof}

By using Proposition \ref{prop:fund}, 
we prove the following proposition:

\begin{proposition}
\label{prop:Cpoint2}
{\it
$A^{(1)}_4(1, 0, 0, 0, 0),$
$A^{(1)}_4(3/5, 0, 1/5, 1/5, 0),$
$A^{(1)}_4( 1/5, 0, 2/5, 2/5, 0),$
$A^{(1)}_4(1/5, 2/5, 0, 0, 2/5),$
$A^{(1)}_4(3/5, 1/5, 0, 0, 1/5)$
have no rational solution of Type C.
}
\end{proposition}

\begin{proof}
If the equations of the proposition have a rational solution 
of Type C, 
it follows 
from Proposition \ref{prop:phinf} that 
$h_{\infty,-1} <0,$ 
which contradicts Proposition \ref{prop:fund}.
\end{proof}

\section{Proof of Main Theorem}

\subsection{Main theorem for Type A}

\begin{proof}

Theorem \ref{thm:a4par-A} 
proves that 
if $A^{(1)}_4(\alpha_i)_{0 \leq i \leq 4}$ has a rational solution of Type A, 
the parameters $\alpha_i \,\,(0 \leq i \leq 4)$ are all integers. 
Proposition \ref{prop:reduction-typeA} 
shows that $(\alpha_i)_{0\leq i \leq 4}$ 
can be transformed into 
$(1,0,0,0,0).$ 
Proposition \ref{prop:Apoint} 
proves that 
$A^{(1)}_4(1,0,0,0,0)$ 
has a rational solution of Type A
such that 
\begin{equation*}
(f_0,f_1,f_2,f_3,f_4)
=
(t,0,0,0,0).
\end{equation*}
and it is unique. 
Therefore, 
$A^{(1)}_4(\alpha_i)_{0\leq i \leq 4}$ has a rational solution of Type A 
if and only if $\alpha_i \,\,(0\leq i \leq 4)$ are all integers. 
Furthermore, the rational solution is unique 
and can be transformed into 
$$
(f_0 ,f_1, f_2, f_3, f_4)=(t,0,0,0,0)
 \,\, \mathrm{with} \,\,
(\alpha_0, \alpha_1, \alpha_2, \alpha_3, \alpha_4) =
(1,0,0,0,0). 
$$

\end{proof}

\subsection{Main theorem for Type B}

\begin{proof}

Theorem \ref{thm:a4par-B} 
implies that 
if $A^{(1)}_4(\alpha_i)_{0\leq i \leq 4}$ has a rational solution of Type B 
for some $i=0,1,2,3,4,$ 
$$ 
(\alpha_i, \alpha_{i+1}, \alpha_{i+2}, \alpha_{i+3}, \alpha_{i+4})
\equiv
\left( 
\frac{n_1}{3}-\frac{n_3}{3}, \, \frac{n_1}{3}, \,  
\frac{n_1}{3}+\frac{n_4}{3}, \, \frac{n_3}{3}, \,  
- \frac{n_4}{3}
\right) \quad \textrm{mod} \, 
\mathbb{Z}, 
$$
where $n_1, n_3, n_4=0, 1, 2.$ 
Proposition \ref{prop:reduction-typeB}
shows that 
the parameters can be transformed so that 
\begin{align*} 
(\alpha_0,\alpha_1,\alpha_2,\alpha_3,\alpha_4)=&
(1/3, 1/3, 1/3, 0, 0), 
(2/3, 0, 0, 1/3, 0), 
(1/3, 0, 0, 2/3, 0), \\
&(0, 1/3, 0, 1/3, 1/3), 
(1,0,0,0,0),
\end{align*}
and 
that 
the parameters are transformed into 
$(1/3, 1/3, 1/3, 0, 0)$  
if 
for some $ i = 0, 1, \ldots 4,$
$$
\quad  
(\alpha_i, \alpha_{i+1}, \alpha_{i+2}, \alpha_{i+3}, \alpha_{i+4}) \equiv
\left\{
\begin{matrix}
\pm \frac13(1, 1, 1, 0, 0) \quad {\rm mod}   \, \Z \\
\pm \frac13(1, -1, -1, 1, 0)\quad {\rm mod}   \, \Z. 
\end{matrix}
\right.
$$
Otherwise, 
the parameters can be transformed so that 
\begin{align*} 
(\alpha_0,\alpha_1,\alpha_2,\alpha_3,\alpha_4)=&
(2/3, 0, 0, 1/3, 0), 
(1/3, 0, 0, 2/3, 0), \\
&(0, 1/3, 0, 1/3, 1/3),
(1,0,0,0,0).
\end{align*}
Proposition \ref{prop:Bpoint1} 
shows that 
$A^{(1)}_4(1/3, 1/3, 1/3, 0, 0)$ has a unique rational solution 
which is given by 
\begin{equation*}
(f_0,f_1,f_2,f_3,f_4)
=
(
\frac{t}{3},
\frac{t}{3},
\frac{t}{3},
0,0
).
\end{equation*}
Furthermore, from Proposition \ref{prop:Bpoint2}, 
it follows that 
none of 
$A^{(1)}_4(2/3, 0, 0, 1/3, 0), $ 
$A^{(1)}_4(1/3, 0, 0, 2/3, 0), $ 
$A^{(1)}_4(0, 1/3, 0, 1/3, 1/3), $ 
and 
$A^{(1)}_4(1,0,0,0,0)$ 
has a rational solution of Type B. 
\par
Therefore, 
$A^{(1)}_4(\alpha_i)_{0\leq i \leq 4}$ 
has a rational solution of Type B 
if and only if 
for some $ i = 0, 1, \ldots 4$,
$$
(\alpha_i, \alpha_{i+1}, \alpha_{i+2}, \alpha_{i+3}, \alpha_{i+4}) 
\equiv
\left\{
\begin{matrix}
\pm 1/3(1, 1, 1, 0, 0) \quad {\rm mod}   \, \mathbb{Z} \\
\pm 1/3(1, -1, -1, 1, 0)\quad {\rm mod}   \, \mathbb{Z}. 
\end{matrix}
\right.
$$
Moreover,
the rational solution is unique 
and 
can be transformed into 
$$
(f_0 ,f_1, f_2, f_3, f_4) =
(t/3, t/3, t/3, 0, 0) \, 
\mathrm{with} 
\,
(\alpha_0, \alpha_1, \alpha_2, \alpha_3, \alpha_4)= 
(1/3, 1/3, 1/3,0,0).
$$

\end{proof}

\subsection{Main theorem for Type C}

\begin{proof}

Theorem \ref{thm:a4par-C}
proves that 
if $A^{(1)}_4(\alpha_k)_{0\leq k \leq 4}$ has a rational solution of Type C 
for some $i=0,1,2,3,4,$ 
$$
(\alpha_i, \alpha_{i+1}, \alpha_{i+2}, \alpha_{i+3}, \alpha_{i+4})
\equiv
\left( 
\frac{n_1}{5}+\frac{2n_2}{5}+\frac{3n_3}{5}, \, 
\frac{n_1}{5}+\frac{2n_2}{5}+\frac{n_3}{5}, \, \frac{n_1}{5}, \, 
\frac{n_1}{5}+\frac{n_2}{5}, \, \frac{n_1}{5}+\frac{n_3}{5} 
\right) 
\,
\textrm{mod} \, \Z,
$$ 
where 
$
n_1, n_2, n_3=0, 1, 2, 3, 4.
$ 
Proposition \ref{prop:reduction-typeC}
shows that 
the parameters can be transformed 
so that 
\begin{align*}
(\alpha_0,\alpha_1,\alpha_2,\alpha_3,\alpha_4)=&
(1/5, 1/5, 1/5, 1/5, 1/5), 
(1, 0, 0, 0, 0), (3/5, 0, 1/5 1/5, 0), 
(1/5, 0, 2/5, 2/5, 0), \\
&(1/5, 2/5, 0, 0, 2/5),
(3/5, 1/5, 0, 0, 1/5) 
\end{align*}
and that 
the parameters are 
transformed into 
$(1/5, 1/5, 1/5, 1/5, 1/5)$ 
if 
for some $i = 0, 1, \ldots, 4,$
$$
(\alpha_i, \alpha_{i+1}, \alpha_{i+2}, \alpha_{i+3}, \alpha_{i+4}) \equiv
\left\{
\begin{matrix}
  j/5(1, 1, 1, 1, 1) \quad {\rm mod} \, \Z    \\
  j/5(1, 2, 1, 3, 3) \quad {\rm mod} \, \Z,   
\end{matrix}
\right.
$$
with some $ j=1, 2, 3, 4.$ 
Otherwise, the parameters are transformed 
\begin{align*}
(\alpha_0,\alpha_1,\alpha_2,\alpha_3,\alpha_4)=&
(1, 0, 0, 0, 0), (3/5, 0, 1/5 1/5, 0), 
(1/5, 0, 2/5, 2/5, 0), \\
&(1/5, 2/5, 0, 0, 2/5),
(3/5, 1/5, 0, 0, 1/5). 
\end{align*}
\par
Proposition \ref{prop:Cpoint1} 
implies that 
$A^{(1)}_4(1/5, 1/5, 1/5, 1/5, 1/5)$ 
has a unique rational solution of Type C 
such that 
$$
(f_0,f_1,f_2,f_3,f_4)
=
(
t/5,
t/5,
t/5,
t/5,
t/5
).
$$
From Proposition \ref{prop:Cpoint2}, 
it follows that 
none of 
$A^{(1)}_4(1, 0, 0, 0, 0),$
$A^{(1)}_4(3/5, 0, 1/5, 1/5, 0),$
$A^{(1)}_4(1/5, 0, 2/5, 2/5, 0),$
$A^{(1)}_4(1/5, 2/5, 0, 0, 2/5)$ 
and 
$A^{(1)}_4(3/5, 1/5, 0, 0, 1/5)$
has a rational solution of Type C. 
\par
Therefore, 
$A^{(1)}_4(\alpha_j)_{0\leq j \leq 4}$ 
has a rational solution of Type C 
if and only if 
for some $i = 0, 1, \ldots, 4,$
$$
(\alpha_i, \alpha_{i+1}, \alpha_{i+2}, \alpha_{i+3}, \alpha_{i+4}) \equiv
\left\{
\begin{matrix}
  j/5(1, 1, 1, 1, 1) \quad {\rm mod} \, \Z    \\
  j/5(1, 2, 1, 3, 3) \quad {\rm mod} \, \Z,   
\end{matrix}
\right.
$$
with some $ j=1, 2, 3, 4.$ 
Furthermore, 
the rational solution is unique 
and 
can be transformed into 
$$
(f_0 ,f_1, f_2, f_3, f_4) =
(t/5, t/5, t/5, t/5, t/5) \, 
\mathrm{with} 
\,
(\alpha_0, \alpha_1, \alpha_2, \alpha_3, \alpha_4)= 
(1/5, 1/5, 1/5, 1/5, 1/5). 
$$

\end{proof}
\par
We complete the proof of the main theorem.

\end{document}